\newcommand{\E}{\mathbb{E}}
\newcommand{\tr}{\mathrm{tr}}
\newcommand{\R}{\mathbb{R}}
\renewcommand{\a}{\bm{\alpha}}
\newcommand{\Sa}{C_{\bm{\alpha}}}
\newcommand{\N}{\mathbb{N}}
\renewcommand{\l}{\lambda}
\renewcommand{\b}{\beta}
\newcommand{\Ak}{\mathcal{B}_k}
\newcommand{\g}{\gamma}
\newcommand{\var}{\mathrm{Var}}
\newcommand{\cH}{\mathcal{H}}
\newcommand{\pp}{\mathbf{p}}
\newcommand{\rt}{R_{\mathrm{tot}}}
\newcommand{\bmu}{\bm{\mu}}
\newcommand{\bmo}{\bm{1}}
\newcommand{\bS}{\Sigma}
\newcommand{\bJ}{\bmo \bmo^T}
\renewcommand{\o}{\bm{\omega}}
\newtheorem{theorem}{Theorem}
\newtheorem{lemma}[theorem]{Lemma}
\newtheorem{proposition}[theorem]{Proposition}
\begin{document}

\title{Optimization of stochastic lossy transport networks\\and applications to power grids}

\author{Alessandro Zocca\thanks{Department of Mathematics, Vrije Universiteit Amsterdam, 1081 HV, Amsterdam, The Netherlands, Email: \texttt{a.zocca@vu.nl}} \, and Bert Zwart\thanks{Centrum Wiskunde and Informatica (CWI), 1098 XG Amsterdam, The Netherlands}}

\maketitle

\begin{abstract}
Motivated by developments in renewable energy and smart grids, we formulate a stylized mathematical model of a transport network with stochastic load fluctuations. Using an affine control rule, we explore the trade-off between the number of controllable resources in a lossy transport network and the performance gain they yield in terms of expected power losses. Our results are explicit and reveal the interaction between the level of flexibility, the intrinsic load uncertainty and the network structure.
\end{abstract}

{\small \textbf{Keywords:} stochastic network; optimization; lossy transport; total effective resistance; efficiency; robustness; power grids; renewable energy sources}

\section{Introduction}
\label{sec1}
A transport network is an abstract model describing a structure in which some commodity is transferred from the ``source'' nodes of the network to the ``sink'' nodes according to a specified routing that is determined by some external principle or design, see~\cite{Daganzo1997, Whittle2007}. Examples of transport networks are road networks, railways, pipes, and power grids.

In this work we focus in particular on \textit{lossy} transport networks where a fraction of the transported good is inevitably lost, having in mind as primary application power systems in which part of the transported electricity is lost due to heat dissipation in the transmission lines.

The main question that we want to address in the present paper is how these transportation losses can be reduced in the scenario in which we have no direct control on the routing, but some of the nodes of the network have controllable loads. This is the case for power systems in which the line flows are determined by physical laws, but at the same time feature an increasing number of controllable energy resources, like energy storage devices, smart buildings and appliances, and electric vehicles.

In the present work we consider a probabilistic model to describe the stochastic fluctuations of the load in (a subset of) the nodes of the network. This is instrumental to model the stochastic load fluctuations due to power demand uncertainty and intermittent generation by renewable energy sources. The current power grids were originally built around conventional power generation systems and therefore they are not equipped to cope with this massive amount of uncertainty, especially in power supply. Managing this uncertainty on such a large scale with existing methods will soon become crucial: in the next decades power grids will have to become more flexible and robust to reduce the likelihood of contingencies and blackouts, whose social and economic impact is enormous.

As mentioned earlier, next to the increasing renewable penetration, there is another powerful trend that is driving this pervasive evolution of power system: the advent of distributed energy resources. At a high level, all these resources can be seen as ``virtual storage/batteries'', in the sense that they can dynamically reduce their power consumptions and even inject electricity in the power grid when necessary, see~\cite{BusicMeyn2017,Taylor2015,Barooah2019}. Even if at the present stage these resources are not fully incorporated, they have a huge potential: if their penetration increases and we can actively and optimally control them, then they can make power grids more flexible and at the same time effectively mitigate the volatile nature of renewable power generation and allow a higher share of renewable energy sources.

The controllable loads that we consider in this work should be seen as an abstraction of more concrete examples, such as (i) actual energy storage that is neither full nor empty, (ii) distribution grids with ample flexible and/or deferrable load \cite{Mathias2016b,BusicMeyn2017,Meyn2013,Meyn2015} or (iii) conventional generators that can provide balancing services.

The stochastic network model considered in this paper aims to understand the potential that these controllable resources can have in mitigating the load uncertainty and in particular the transportation losses due to the stochastic load fluctuations in the network nodes. Specifically, we consider a network with random sources and sinks modeled by an undirected connected weighted graph $G$ consisting of $n$ nodes and $m$ edges to which are associated non-negative weights $\bm{\b} \in \R_+^{m}$, and investigate how much could the \textit{average} total loss be reduced by operating optimally the subset $B \subseteq V$ of nodes with controllable loads.

The metric we consider here to quantify the transportation losses due to stochastic fluctuations is a quadratic function of the load profile that has been introduced in~\cite{JohnsonChertkov2010} as proxy for the total power losses in AC power grids, as we will review in more detail in the next section. 
Such a metric generalizes the notion of \textit{total effective resistance} $\rt(G)$ of the graph $G$, also known as \textit{Kirchhoff index}. This is a key quantity that measures how well connected and robust a network is and for this reason has been extensively studied and rediscovered in various contexts, such as complex network analysis~\cite{Ellens2011} and theoretical chemistry (for an overview see~\cite{Zhou2008} and references therein).

In this paper, we address the question of how to optimally operate distributed energy resources to minimize power losses in power systems. Due to imperfect sensing and delayed communication, a real-time perfect coordination between network nodes based on the realized fluctuations is unfeasible. We consider thus a static decentralized policy that amounts to an affine control rule for the controllable loads, which is inspired by the Automatic Generation Control (AGC) mechanism, currently used in power systems, cf.~\cite{Wood2014}. This control mechanisms prescribes that in case of a power imbalance, all the generators have to either increase or decrease their power generation \textit{proportionally to their partecipation factors}, which are static nonnegative scalars set beforehand based on economical principles.

In the present work, we envision a scenario where not just a few big conventional generators, but several other distributed energy resources could also join this effort to balance power fluctuations. We thus formulate a constrained optimization problem to find the optimal static \textit{load-sharing factors} for the controllable loads. We prove that the solution is unique and give a closed-form expression for such an optimal control from which reflects the interplay between the network structure, the location of the controllable nodes, and the correlation structure of the load fluctuations.

In particular, our result shows how the correlation structure between load fluctuations affects the optimal operations of the network resources and this is extremely relevant in power grids with geographically close wind or solar farms, whose power outputs are obviously highly correlated. These insights are derived without making Gaussian assumptions on the distribution of the fluctuations. 


We then use this explicit solution to explore the trade-off between the number of controllable resources available in a network and the performance gain they yield, quantified as transportation loss reduction. The analysis builds on and extends that of~\cite{ZZ16}, which considered the scenario with only two controllable loads. In particular, the authors show therein that the expected total loss due to fluctuations can be reduced by $25\%$ in a line network by adding one controllable storage device. In the present paper we extended that insight by showing that for large graphs the total expected loss can be reduced on average by a factor $(1+1/k)/2$ in the scenario where $k$ controllable loads are available. The key insight is larger values of $k$ yield diminishing improvements in terms of total expected loss.

This suggests that, even if power grids are becoming locally more robust, transportation losses can be reduced by up to 50\%, and the number of controllable loads or balancing services $k$ quantify how close one can get to this reduction. Though our model is stylized, it provides a simple quantitative estimate on the value of balancing services in a scenario where each node in the network is self-sufficient on average.



We remark that our stylized model is ``static'', in the sense the optimal control we derive does rebalance the total power mismatch in the network in any scenario, but does not depend on the \textit{realized} load fluctuations. Indeed, it depends only on the network structure, on the location of the controllable loads and on their average covariance structure of the load fluctuations. For this reason our model is not specific for a precise time-scale and provides insights into the value of balancing services both in real-time operations as well as long-term planning.

Our work provides a new mathematical approach which can be of help for the design of future power grids and of control schemes for distributed energy resources. In this respect, it complements a large body of literature on optimal topology design for power grids~\cite{Deka2017,JohnsonChertkov2010,GBS08} and on optimal control of multiple controllable devices and/or generators where often the designed participation factors are also affine in the stochastic load fluctuations, see e.g.,~\cite{Apostolopoulou2014,Bienstock2016,Bienstock2017,Bienstock2016a,Chertkov2017d,Guggilam2017b,Guggilam2017c,LubinDvorkinBackhaus2016,Kanoria2011,RoaldChertkov2016,Roald2017a,Sjodin2012}.
Optimal policies for storage management, especially aiming at the mitigation of the uncertainties in wind generation, have been explored in~\cite{BGK12,Gast2014b,Gast2012,VanDeVen2013}, where, however, the physical network is not modeled explicitly. Optimal storage placement can increase network reliability, as shown in~\cite{BCZ16} by simulation techniques. Storage can also be used for arbitrage, exploiting temporal price differences~\cite{CGZ14,CZ15} and the impact of storage on energy markets has been studied in~\cite{Cruise2016,Gast2013b}.

The rest of the paper is organized as follows. We provide a detailed model description in Section \ref{sec2}. In Section \ref{sec3} we investigate the optimal load sharing factors in several scenarios. These results are applied in Section \ref{sec4} where a scaling law is presented for large networks. A more general optimization problem, which takes into account economic factors and/or further limitations of the controllable loads is presented in Section~\ref{newsec}. In Section \ref{sec5} we report several numerical experiments and, lastly, Section \ref{sec6} concludes.

\section{Model description}
\label{sec2}
In this paper we model a lossy transport network as a weighted graph $(G,w)$, where the graph $G$ is a simple undirected graph $G=(V,E)$ with $|V| =n$ nodes labelled as $V=\{1,\dots,n\}$, and $|E|=m$ edges and $\bm{\beta} \in \R_+^m$ is the collection of edge weights. In the context of power grids, the nodes of $G$ are often referred to as \textit{buses}, the edges as \textit{transmission lines} and the quantity $\b_{i,j}$ is the \textit{susceptance} of the transmission line $\ell=(v,w) \in E$ connecting buses $i$ and $j$. Missing edges can be thought as edges with zero weight.

The \textit{weighted Laplacian matrix} of the graph $G$, often referred to also as the \textit{susceptance matrix} of $G$ in the case of a power system, is the matrix $L\in \R^{n\times n}$ defined for every $i,j =1,\dots,n$ as
\[
    L_{i,j} :=
    \begin{cases}
    	\sum_{k \neq i} \b_{i,k}	& \text{ if } i=j,\\
		- \b_{i,j}					& \text{ if } i \neq j.
	\end{cases}
\]
It is well-known that $L$ is a real symmetric positive semi-definite matrix. By construction, all the rows of $L$ sum up to zero and thus the matrix $L$ is singular. Under the assumption that $G$ is a connected graph, the eigenvalue zero has multiplicity one and the corresponding eigenvector is equal to the vector with all unit entries, which we denote by $\bm{1} \in \R^n$. 

Let $L^+ \in \R^{n \times n}$ be the \textit{Moore-Penrose pseudoinverse} of the weighted Laplacian matrix $L$. Using the eigenspace structure of $L$, the pseudoinverse $L^+$ can be defined as
\[
	L^+ :=\left  (L+ \frac{\bm{1}\bm{1}^T}{n}\right )^{-1} -\frac{\bm{1}\bm{1}^T}{n}.
\]
The matrix $L^+$ is also real, symmetric, and positive semi-definite. For the proof of these properties of the matrices $L$ and $L^+$ and for further spectral properties of graphs, we refer the reader to~\cite{RanjanZhangBoley2014,VanMieghem2011}. 

Denote by $\pp \in \R^{n}$ the \textit{load profile} at the network nodes, where $\pp_i$ is the load at node $i$  for every $i=1,\dots,n$. In the context of power grids the $i$-th entry of the vector $\pp$ models the power generated (if $\pp_i>0$) or consumed (if $\pp_i<0$) at node $i$. We say that a load profile $\pp \in \R^n$ is \textit{balanced} if $\mathbf{1}^T \pp=0$.

Given a network with a balanced load profile $\pp \in \R^n$, we define its \textit{total loss} $\cH=\cH(\pp)$ as
\[
 	\cH:= \frac{1}{2} \pp^T L^+ \pp.
\]
The scalar quantity $\cH$ is a quadratic form of the load profile vector $\pp$ and, as such, is always non-negative, thanks to the fact that $L^+$ is a positive semi-definite matrix.

The total aggregated loss $\cH$ will  be central in our analysis, being the most natural choice for an efficiency metric in a lossy transport networks such as power systems. 
More specifically, $\cH$ has been shown by~\cite{JohnsonChertkov2010} to be good approximation for \textit{total power loss} in AC power grids, as we will briefly outline. 

AC current flows are described in terms of complex amplitudes and lines with complex impedances. In a power transmission networks operating in stationary conditions (i) all voltages are mostly constants and (ii) we can ignore reactive power flows and line conductances since they are an order of magnitude smaller than the active power flows and line susceptances, respectively. Under these two assumptions, it is reasonable to consider the linearized version of the AC Kirchhoff equations for the power flows, i.e., the so-called \textit{DC-approximation}, called in this way since it resembles in structure the equations describing a resistive network with DC flows, cf.~\cite[Chapter 6]{Wood2014}. If $\pp$ denotes the vector of active power injections in the network nodes, by keeping only the leading term of such a DC-approximation, the power flows $\bm{f} \in \R^m$ on the network lines are given by
\begin{equation}
\label{eq:fLp}
	\bm{f} = \Lambda L^+ \pp,
\end{equation}
where $L$ is the weighted Laplacian matrix derived looking only the imaginary part of the network admittance matrix and $\Lambda \in \R^{m \times n}$ is the corresponding weighted edge-vertex incidence matrix
\[
	\Lambda_{\ell,i} =
	\begin{cases}
	\beta_\ell & \text{ if } \ell=(i,j),\\
	-\beta_\ell & \text{ if } \ell=(j,i),\\
	0 & \text{ otherwise.}
	\end{cases}
\]
Since the power loss on each line $\ell$ is proportional to $\beta_\ell^{-1} \bm{f}_\ell^2$, the aggregated power loss is (up a constant factor capturing the conductance-to-admittance ratio of the lines) equal to
\[
	\cH = \frac{1}{2} \sum_{\ell \in E} \beta_\ell^{-1} \bm{f}_\ell^2 = \frac{1}{2}  \bm{f}^T \mathrm{diag}(\beta_1,\dots,\beta_m)^{-1} \bm{f} = \frac{1}{2}  \pp^T L^+ \Lambda^T \mathrm{diag}(\beta_1,\dots,\beta_m) \Lambda L^+ \pp = \frac{1}{2} \pp^T L^+ \pp,
\]
where we used the fact that $L=\Lambda^T \mathrm{diag}(\beta_1,\dots,\beta_m)^{-1} \Lambda$ and that $L^+ L L^+ = L^+$. Therefore, modulo a trivial rescaling, the quantity $\cH$ is an approximation for the total power losses calculated using only the leading order DC-approximation of the AC network flows.

The quantity $\cH$ has also been considered as a scalar measure of the \textit{network tension} in~\cite{GuoLiangLow2017,LaiLow2013}, where it is shown to be monotone along a cascade failure, as long as the network remains connected. Lastly, as we will show later, $\cH$ can also be seen as a generalized total effective resistance in the sense that it quantifies how robust the network $G$ is against a stochastic load profile with a predefined covariance structure.

\subsection{Stochastic fluctuations and load-sharing factors}
\label{sub21}
In this work we are particularly interested in a transport network with a \textit{stochastic} load profile, which means that we take $\pp$ to be a multivariate random variable.

More precisely, we assume $\pp$ to be of the form $\pp = \bmu + \o$, where $\bmu\in \R^n$ is the nominal load profile in the network and $\o$ is a $n$-dimensional random vector modeling the fluctuations. We henceforth assume that $\E \o = \bm{0}$ and that $\o$ has finite second moment. We further denote by $\bS \in \R^{n\times n}$ the covariance matrix of the load fluctuations, namely $\bS_{i,j}=\mathrm{cov}(\o_i,\o_j)=\E[ \o_i \o_j] < \infty$.
Nodes in which there are no stochastic load fluctuations can be modeled by setting all the entries equal to zero in the corresponding row and column of the matrix $\bS$. We denote by $S \subseteq V$ the subset of nodes with stochastic load fluctuations and we will henceforth assume that $|S|\geq 1$.

We further assume that the nominal load profile is balanced on average, namely $\bm{1}^T\bmu=0$; this assumption is reasonable as every 5-15 minutes the setpoint $\bmu$ is adjusted by solving the so-called Optimal Power Flow (OPF) problem which specifically constraint the net total power to be equal to zero. This assumption, however, does not guarantee that every stochhastic realization of the load profile is balanced: indeed, the total mismatch $\bm{1}^T \pp$ in the network is a random variable, which can be expressed as the net sum of fluctuations, namely
\[
	\bm{1}^T \pp = \bm{1}^T(\pp - \bmu) = \bm{1}^T \o = \sum_{i=1}^n \o_i.
\]
Let $\sigma^2:=\var \Big (\sum_{i=1}^n \o_i \Big)$ be the variance of the sum of the load fluctuations, which also rewrites as
\begin{equation}
\label{eq:trSJ}
	\sigma^2 =  \sum_{i,j=1}^n \bS_{i,j} = \bmo^T \bS \bmo = \tr(\bS \bmo \bmo^T).
\end{equation}
Since we assumed that there is at least one node with stochastic load fluctuations, we have $\sigma^2>0$.

In order to cope with the stochastic fluctuations and, in particular, to keep the network balanced, we assume that load at each node is \textit{controllable}: for every $i=1,\dots,n$, node $i$ can deal with (either generate or store) a controllable fraction $\a_i \in \R$ of the realized total mismatch $ \bm{1}^T(\pp - \bmu) = \sum_{i=1}^n \o_i$. In other words, we assume that while using the \textit{load-sharing factors} $\a = (\a_1,\dots,\a_n) \in \R^n$ the \textit{net} load profile $\pp(\a)$ is given by
\[
	\pp(\a) =  \Big (\pp_1 - \a_1 \sum_{i=1}^n \o_i, \, \dots \, , \pp_n - \a_n \sum_{i=1}^n \o_i\Big ) =  \Big (\bmu_1 +\o_1 - \a_1 \sum_{i=1}^n \o_i, \, \dots \, , \bmu_n +\o_n - \a_n \sum_{i=1}^n \o_i\Big ).
\]
For any $j=1,\dots,n$, the term  $\a_j \sum_{i=1}^n \o_i$ corresponds to the power generated or stored in the corresponding controllable load when using an affine control responsive to stochastic load fluctuations.
We can rewrite the net power load profile $\pp(\a)$ in vector form as
\begin{equation}
\label{eq:Sap}
	\pp(\a) = \Sa \pp = \Sa (\bmu+ \o) = \bmu + \Sa \o,
\end{equation}
where $\Sa \in \R^{n \times n}$ is the matrix defined as
\[
	\Sa:=I - \a \, \bm{1}^T = \left(
	\begin{array}{ccccc}
	1-\a_1 & -\a_1 & \dots &  & -\a_1 \\
	-\a_2 & 1-\a_2 & -\a_2 & \dots & -\a_2 \\
	 & \ddots & \ddots & \ddots &  \\
	\vdots &  &  & & \vdots \\
	-\a_n &  & \dots & -\a_n & 1-\a_n
	\end{array} \right ).
\]
The last equality in~\eqref{eq:Sap} follows from the fact that $\bm{1}^T\bmu=0$, since $\Sa \bmu =(I - \a \, \bm{1}^T) \bmu = \bmu - 0 \cdot \a = \bmu$. When the load-sharing factors $\a$ are used, the total mismatch in the network is equal to
\[
	\bm{1}^T \pp(\a) = \bm{1}^T \Sa (\bmu+\o) =\bm{1}^T (I - \a \, \bm{1}^T ) \o = (\bm{1}^T - (\bm{1}^T \a) \bm{1}^T) \o.
\]
From this expression it is immediate to derive the condition on $\a$ that guarantees that the net load profile $\pp(\a)$ is balanced for \textit{any} realization $\o$, which is stated in the next lemma.

\begin{lemma}[Stochastic load profile balance condition]%
\label{lem:asum}
Consider a network with balanced nominal load profile $\bm{1}^T \bmu = 0$ and controllable loads using load-sharing factors $\a \in \R^n$. The stochastic load profile $p(\a)$ is balanced for every realization of the fluctuations if and only if 
\begin{equation}
\label{eq:asum}
	\bm{1}^T \a = \sum_{i=1}^n \a_i = 1.
\end{equation}
\end{lemma}

\subsubsection*{Model discussion}
Before presenting our results in the next section, we briefly discuss here the motivation behind the assumptions on the distribution of $\o$ and the motivation for considering an affine static control policy for the fluctuations.

First of all, the random variable $\sum_{j=1}^n \o_j$, which describes the net power mismatch, can have any sign. We thus tacitly assume that our controllable nodes can respond to both positive and negative fluctuations, meaning that they can both store energy in excess or provide energy if the network needs it. For sake of generality, we impose no further constraints on the random variable $\sum_{j=1}^n \o_j$, and thus the power absorbed/injected by controllable node $i$, i.e., $\a_i \sum_{j=1}^n \o_j$, could theoretically take very large (positive and negative) values. This may seem a dubious and unrealistic assumption, but it is not for various reasons, which we now outline.
Firstly, in normal operating conditions and on short time scales the quantity $\sum_{j=1}^n \o_j$ is typically ``small''. Indeed, our model is particularly relevant for a prompt response on short-time scales, in which \textit{large unforeseen power fluctuations are highly unlikely}. If an extremely large net power fluctuation occurs, it would be physically impossible for controllable loads and storage devices to resolve this imbalance by themselves and the network operator needs to take specific ad-hoc emergency actions anyway. In particular, the real-time energy market and a consequent Optimal Power Flow routine on the 5-15 minutes scale will allow for a new safe setpoint $\bmu$ and for energy reserves to be used/bought.
Secondly, since a detailed statistical modeling of demand fluctuations or renewable power production is not within the scope of this paper, we consider a general multi-dimensional distribution for the stochastic fluctuations $\o_1,\dots,\o_n$. It is however very reasonable to assume that the support of these random variables is bounded, since, for instance, renewable energy sources have cannot produce a negative amount of power nor produce beyond a specific power rating (above which automatic protective relay mechanisms are automatically triggered causing to so-called power curtailment).
Lastly, a controllable load that \textit{at a specific moment} does not have the two-directional flexibility (i.e., that cannot both store and release energy) can temporarily ``go offline'' and we can account for it by adding the constraint $\a_i=0$ for the corresponding network node.

An affine control modeled using the load-sharing factors $\a \in \R^n$ is a clearly simplification, especially when the controllable loads models energy storage. Indeed, it does not incorporate many details, among which possible ramping constraints or the current state of charge. In our model such details are omitted on purpose to have a mathematically tractable optimization problem and to better identify the interplay between load uncertainty and storage operations. A further simplification we make is that we allow to choose $\a$ without invoking line limits, as we also do this for mathematical tractability, but the extension we present in Section~\ref{newsec} accounts for these limits and several others.

We remark that these assumptions on the stochastic fluctuations $\o$ and on the affine control of the deviations from the nominal setpoints are fairly common in power systems operations, see e.g.~\cite{BCH14,RoaldChertkov2016,Roald2017a}. 



\subsection{Expected total loss: definition and properties}
For any $1\leq k \leq n$, we can model a network where exactly $k$ nodes have controllable loads by imposing that the remaining $n-k$ nodes have load-sharing factors equal to zero, so that for any such node $\pp_i=\pp_i(\a)$. Using the net load profile $\pp(\a)$, the total loss rewrites as
\[
	\cH(\a) = \frac{1}{2} \pp(\a)^T L^+ \pp(\a),
\]
and therefore $\{\cH(\a)\}_{\a \in \R^n}$ is a family of random variables parametrized by the control $\a$. Being a quadratic form and being $L^+$ a positive semi-definite matrix, it immediately follows that $\cH(\a)$ is a non-negative random variable for any $\a\in \R^n$.

The next proposition, which is proved in Appendix~\ref{app:a}, shows how, leveraging the properties of the matrices $L^+$ and $\Sa$, the expected total loss $\E \cH(\a)$ rewrites as the sum of two contributions, one stochastic and one deterministic which is not affected by the control $\a$. Furthermore, it rewrites the expected total loss as a quadratic function the load-sharing vector $\a$.

\begin{proposition}
\label{prop:Eha}
Consider a network with a balanced nominal load profile $\bmu$ and zero-mean stochastic fluctuations $\o$ with covariance matrix $\bS$. Then, the expected total loss using the control $\a$ is given by
\begin{equation}
\label{eq:EH}
	\E \cH(\a) = \E \cH_s(\a) + \frac{1}{2} \bmu^T L^+ \bmu,
\end{equation}
where $\E \cH_s(\a)$ is the expected total loss due to the stochastic fluctuations. Furthermore, $\E \cH_s(\a) \geq 0$ for every $\a \in \R^n$, and the following identity holds:
\begin{equation}
\label{eq:EHs}
	\E \cH_s(\a) = \frac{\sigma^2 }{2} (\a^T L^+ \a) -  \bm{1}^T \bS L^+ \a  + \frac{1}{2} \tr(\bS L^+).
\end{equation}
\end{proposition}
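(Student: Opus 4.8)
The plan is to substitute the net load profile $\pp(\a)=\bmu+\Sa\o$ from \eqref{eq:Sap} directly into the definition $\cH(\a)=\tfrac12\pp(\a)^T L^+\pp(\a)$ and expand. Since $L^+$ is symmetric, this produces three terms: a deterministic quadratic $\tfrac12\bmu^T L^+\bmu$, a term linear in $\o$ of the form $\bmu^T L^+\Sa\o$, and a term quadratic in $\o$, namely $\tfrac12(\Sa\o)^T L^+\Sa\o$. Taking expectations and using $\E\o=\bm0$ kills the linear term, so that $\E\cH(\a)=\tfrac12\bmu^T L^+\bmu+\tfrac12\E\big[(\Sa\o)^T L^+\Sa\o\big]$, which is exactly the decomposition \eqref{eq:EH} upon setting $\E\cH_s(\a):=\tfrac12\E[(\Sa\o)^T L^+\Sa\o]$. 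Non-negativity of $\E\cH_s(\a)$ is then immediate: the integrand is a quadratic form of the positive semi-definite matrix $L^+$, hence almost surely non-negative, and so is its expectation.

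The remaining work is to evaluate $\E[(\Sa\o)^T L^+\Sa\o]$ explicitly as a quadratic in $\a$. The key simplification is to write $\Sa\o=(I-\a\bmo^T)\o=\o-(\bmo^T\o)\a$, observing that $\bmo^T\o=\sum_i\o_i$ is a scalar. Expanding the quadratic form then yields three pieces whose expectations I would compute separately using the covariance structure $\bS=\E[\o\o^T]$: the trace identity $\E[\o^T L^+\o]=\tr(L^+\E[\o\o^T])=\tr(\bS L^+)$, obtained via $\o^T L^+\o=\tr(L^+\o\o^T)$; the cross term $\E[(\bmo^T\o)\,\o]=\E[\o\o^T]\bmo=\bS\bmo$; and the scalar $\E[(\bmo^T\o)^2]=\bmo^T\bS\bmo=\sigma^2$ from \eqref{eq:trSJ}.

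Finally I would collect the three contributions, using the symmetry of both $L^+$ and $\bS$ to rewrite the cross term $\a^T L^+\bS\bmo=\bmo^T\bS L^+\a$, so as to obtain
\[
\E\cH_s(\a)=\tfrac12\big[\sigma^2\,\a^T L^+\a-2\,\bmo^T\bS L^+\a+\tr(\bS L^+)\big],
\]
which is precisely \eqref{eq:EHs}. There is no genuine obstacle here: the computation is essentially bookkeeping, and the only point requiring care is keeping scalars, vectors, and matrices straight when expanding $(\Sa\o)^T L^+\Sa\o$ and when invoking symmetry to merge the two linear-in-$\o$ cross terms into the single coefficient of $\a$.
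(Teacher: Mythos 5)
Your proof is correct, and its first half --- expanding $\cH(\a)=\tfrac12(\bmu+\Sa\o)^T L^+(\bmu+\Sa\o)$, killing the cross term with $\E\o=\bm{0}$, and getting non-negativity from positive semi-definiteness of $L^+$ --- is exactly the paper's argument. Where you diverge is in deriving the identity \eqref{eq:EHs}. The paper stays at the matrix level: it invokes the general quadratic-form expectation result (Corollary 3.2b.1 of Mathai--Provost) to write $\E\cH_s(\a)=\tfrac12\tr(\Sa^T L^+\Sa\bS)$, then expands $\Sa^T L^+\Sa = L^+ - L^+\a\bmo^T - \bmo\a^T L^+ + \bmo\,(\a^T L^+\a)\,\bmo^T$ and grinds through cyclic-permutation, transpose, and outer-product trace identities to reach \eqref{eq:EHs}. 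You instead exploit the rank-one structure of the control at the vector level, writing $\Sa\o=\o-(\bmo^T\o)\a$ with $\bmo^T\o$ a scalar, so the quadratic form splits into three scalar random terms whose expectations are computed directly from $\E[\o\o^T]=\bS$: namely $\tr(\bS L^+)$, $\a^T L^+\bS\bmo=\bmo^T\bS L^+\a$, and $\bmo^T\bS\bmo=\sigma^2$. Your route is more elementary and self-contained (the only "lemma" needed is the trace trick $\E[\o^T L^+\o]=\tr(\bS L^+)$, itself a one-line computation), whereas the paper's route generalizes more readily to controls $\Sa$ without rank-one structure, since it never uses the specific form $\Sa=I-\a\bmo^T$ until the final trace expansion. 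Both are complete and rigorous; yours is arguably the cleaner proof of this particular statement.
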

The first important remark is that the expected total loss $\E \cH(\a)$ is a quadratic form in the vector $\a \in \R^n$ since it can be rewritten as
\begin{equation}
\label{eq:quadform}
 	\E \cH(\a) = \frac{\sigma^2}{2} \a^T A \a -  \bm{b}^T \a  + c,
\end{equation}
where $A= L^+$ is a positive semi-definite  matrix, $\sigma^2 >0$, $\bm{b}= L^+ \bS \bm{1} \in \R^n$, and $c = \tr(\bS L^+)/2 + \frac{1}{2} \bmu^T L^+ \bmu \in \R^+$. Furthermore, we can already conclude that the nominal load profile $\bmu$ has no impact on the optimal control $\a$, since it appears only in the constant term $c$.


Before investigating what is the optimal load-sharing vector for a given network and covariance structure of the noise, we argue here why the quantity $\E \cH(\a)$ can be seen as a generalized notion of effective resistance. In order to do so, we will first recall some classical definitions.

The \textit{effective resistance} $R_{i,j}$ between a pair of nodes $i$ and $j$ of the network $G$ is defined as the electrical resistance measured across nodes $i$ and $j$ when we look at $G$ as electrical network in which resistors with conductance $\b_1^{-1},\dots,\b_m^{-1}$ are placed at the corresponding network edges. Equivalently,
\[
	R_{i,j} := (\mathbf{e}_i - \mathbf{e}_j)^T L^+ (\mathbf{e}_i - \mathbf{e}_j) = L^+_{i,i} + L^+_{j,j} - 2 L^+_{i,j},
\]
where $\mathbf{e}_i$ denotes the vector with a $1$ in the $i$-th coordinate and $0$'s elsewhere. The \textit{total effective resistance} of a graph $G$ is then defined as
\[
	\rt(G):=\frac{1}{2} \sum_{i,j=1}^n R_{i,j}.
\]
The same quantity is also known as \textit{Kirchhoff index} when the network $G$ is such that all the edge weights are equal to $1$. As mentioned in the introduction, the total effective resistance has been used in various contexts~\cite{Ellens2011,Zhou2008} to quantify how well connected a given network is. 
In the context of electrical networks the total effective resistance $\rt(G)$ is shown in~\cite{GBS08} to be proportional to the average power dissipated in a resistive DC network $(G,w)$ when random i.i.d.~currents with zero mean and unit variance are injected at the nodes.

We can look at the network $(G,w,\a)$ with controllable loads introduced earlier as a flexible transport network, where the load-sharing factors $\a_1,\dots,\a_n$ can be tuned to respond optimally to specific stochastic load fluctuations. In this respect, we claim that the quantity $\E \cH_s(\a)$ can be seen as a (rescaled) \textit{generalized} total effective resistance that measures how ``robust'' the network $(G,w,\a)$ is against stochastic load fluctuations with covariance structure $\bS$. To further corroborate this claim, we now show that the expected total loss reduces to the classical total effective resistance $\rt(G)$ in the special case where the load-sharing factors are all equal, i.e.,~$\a_i=1/n$ for every $i=1,\dots,n$, and the stochastic load fluctuations are i.i.d.~random variables with with zero mean and unit variance, i.e.,~$\bS=I$. Using~\eqref{eq:EHs}, the expected total loss due to fluctuations rewrites as
\begin{equation}
\label{eq:cHrt}
	 \E \cH_s\Big(\frac{1}{n} \bm{1} \Big) = \frac{n}{2 n^2} (\bm{1}^T L^+ \bm{1}) -  \bm{1}^T L^+ \bm{1}  + \frac{1}{2} \tr(L^+) = \frac{1}{2} \tr (L^+) = \frac{\rt(G)}{2n},
\end{equation}
where in the last step we use the well-known identity $\rt(G) = n \cdot \tr(L^+)$ proved by~\cite{KR93} that relates the total effective resistance of a graph with its spectrum.

Rayleigh's monotonicity principle~\cite{Doyle2000} states that the pairwise effective resistance $R_{i,j}$ is a non-increasing function of the edge weights and, as a consequence, also the total effective resistance $\rt(G)$ is. The following proposition, proved in Appendix~\ref{app:a}, shows that a similar property also holds for $\cH(\a)$: regardless of the load-sharing vector $\a$, the total power loss does not increase when edges are added or weights are increased.

\begin{proposition}[Monotonicity of total power loss] \label{prop:monotone}
Let $G$ be a weighted connected graph and let $G'$ the graph obtained from $G$ by increasing the weight of edge $e=(i,j)$ by $\beta >0$ or by adding the edge $e=(i, j)$ with weight $\beta >0$. For any load-sharing vector $\a \in \R^n$ and any realization of the stochastic load fluctuations $\o$, the following inequality holds
\[
	\cH^{G'} (\a) \leq \cH^{G}(\a),
\]
and, in particular, $\E \cH^{G'} (\a) \leq \E \cH^{G}(\a)$.
\end{proposition}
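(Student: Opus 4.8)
The goal is to show $\cH^{G'}(\a) \le \cH^{G}(\a)$ pointwise in $\o$, which immediately yields the inequality in expectation. Since $\cH(\a) = \tfrac{1}{2}\pp(\a)^T L^+ \pp(\a)$ with $\pp(\a) = \Sa(\bmu + \o)$ and $\Sa$ does not depend on the graph, the net load profile $\pp(\a)$ is \emph{identical} for $G$ and $G'$ for every fixed realization $\o$. Thus the entire statement reduces to comparing the two quadratic forms, i.e.\ to proving
\begin{equation*}
	\pp(\a)^T (L^+)^{G'} \pp(\a) \le \pp(\a)^T (L^+)^{G} \pp(\a)
\end{equation*}
for the single fixed vector $q := \pp(\a)$. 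The plan is therefore to reduce a pseudoinverse inequality to a statement about the Laplacians themselves.

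The key observation is that adding an edge $e=(i,j)$ with weight $\beta>0$, or increasing an existing weight by $\beta$, changes the Laplacian by a rank-one positive semidefinite term: $L^{G'} = L^{G} + \beta\,(\mathbf{e}_i - \mathbf{e}_j)(\mathbf{e}_i - \mathbf{e}_j)^T$. In particular $L^{G'} \succeq L^{G}$. The standard fact I would invoke is that the Laplacian ordering is \emph{reversed} under the Moore--Penrose pseudoinverse, namely $L^{G'} \succeq L^{G}$ implies $(L^+)^{G'} \preceq (L^+)^{G}$, provided both matrices share the same kernel. This is exactly the point where connectedness matters: both $G$ and $G'$ are connected, so by the spectral discussion preceding the proposition, $\ker L^{G} = \ker L^{G'} = \mathrm{span}\{\bm{1}\}$, and the pseudoinverse acts as a genuine inverse on the common orthogonal complement $\bm{1}^\perp$. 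On that subspace the ordering of symmetric positive-definite operators is inverted by inversion, which gives the desired $(L^+)^{G'} \preceq (L^+)^{G}$ as operators on $\R^n$ (both vanishing on $\bm{1}$). Applying this to the fixed vector $q$ finishes the argument.

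The cleanest route for the pseudoinverse-reversal step, and the one I would actually write out, is a direct rank-one computation rather than appealing to an abstract operator-monotonicity theorem: write the single added resistor as a rank-one update and apply a Sherman--Morrison-type identity for pseudoinverses. Concretely, setting $u := \mathbf{e}_i - \mathbf{e}_j$ (which lies in $\bm{1}^\perp$), one gets a formula of the form
\begin{equation*}
	(L^+)^{G'} = (L^+)^{G} - \frac{\beta\, (L^+)^{G} u\, u^T (L^+)^{G}}{1 + \beta\, u^T (L^+)^{G} u},
\end{equation*}
valid because $u \in \bm{1}^\perp$ keeps the update inside the range of $L^{G}$. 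The subtracted term is $\beta/(1+\beta\,u^T (L^+)^G u)$ times the rank-one PSD matrix $\big((L^+)^G u\big)\big((L^+)^G u\big)^T$, and the scalar prefactor is positive since $\beta>0$ and $(L^+)^G$ is PSD. Hence $(L^+)^{G} - (L^+)^{G'} \succeq 0$, and sandwiching $q$ yields $q^T\big((L^+)^G - (L^+)^{G'}\big)q \ge 0$, which is the claim.

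The only genuine obstacle is justifying the Sherman--Morrison identity in the \emph{pseudoinverse} setting: the usual formula is for invertible matrices, and here both $L^G$ and $L^{G'}$ are singular. The fix is that the update vector $u=\mathbf{e}_i-\mathbf{e}_j$ is orthogonal to the common kernel $\bm{1}$, so the whole problem can be restricted to the invertible operators $L^G|_{\bm{1}^\perp}$ and $L^{G'}|_{\bm{1}^\perp}$, where the ordinary Sherman--Morrison formula applies verbatim; one then extends back to $\R^n$ by noting both pseudoinverses annihilate $\bm{1}$. I would state this restriction explicitly to avoid any ambiguity about ranges and kernels, since that is where a careless application of the formula would fail.
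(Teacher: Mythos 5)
Your proposal is correct and takes essentially the same route as the paper: both write $L_{G'} = L_G + \beta\,\bm{m}_e\bm{m}_e^T$ with $\bm{m}_e = \mathbf{e}_i - \mathbf{e}_j$, apply a Sherman--Morrison-type identity for the pseudoinverse (your formula coincides with the paper's after noting that $\beta/(1+\beta\, u^T L_G^+ u) = 1/(\beta^{-1} + u^T L_G^+ u)$), and conclude pointwise in $\o$ from the nonnegativity of the subtracted rank-one term. The only difference is in justifying the pseudoinverse update formula: the paper cites Meyer's generalized inversion result, whereas you derive it by restricting to $\bm{1}^\perp$ where both Laplacians are invertible --- a valid, self-contained substitute for the citation.
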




\section{Optimal load-sharing control}
\label{sec3}
In this section we consider the problem of minimizing the expected total loss $\E \cH(\a)$ given a network $G$ and a stochastic load covariance structure $\bS$.

Let $B \subseteq V$ be the subset of $k$ nodes with controllable loads. We focus first on the scenario in which not all the nodes have controllable loads and thus assume $1\leq k < n$. Besides the constraint~\eqref{eq:asum}, we further add $n-k$ constraints on the optimal load-sharing vector $\a \in \R^n$ to account for the absence of controllable loads in the nodes in $V \setminus B$, obtaining the following constrained optimization problem in $\R^n$:%
\begin{align}
		\min_{\a \in \R^n} \quad & \E \cH_s(\a) \nonumber\\
		\text{s.t.} \quad & \bm{1}^T \a = 1, \label{eq:optk}\\
		& \a_{v} =0, \quad \forall\, v \in V \setminus B. \nonumber
\end{align}
Note we consider only the expected total loss due to stochastic load fluctuations in the objective function, since in view of~\eqref{eq:EH} it differs only by a constant from the expected total loss.

We henceforth assume that the $k$ nodes with the controllable loads are those with labels $1,\dots,k$, i.e.,~$B=\{1,\dots,k\}$. We can make this assumption without loss of generality as it amounts to a relabelling the network nodes. If this is the case, the rows and columns of matrices $L$, $L^+$, and $\bS$ and the entries of the vector $\bmu$ are also rearranged accordingly.

Let $P_B \in \{0,1\}^{n \times k}$ be the binary matrix that maps any $k$-dimensional vector $\widetilde{\a} \in \R^k$ to the $n$-dimensional vector $P_B \widetilde{\a} = (\widetilde{\a}_1,\dots,\widetilde{\a}_k, 0, \dots, 0) \in \R^n$. Such a matrix can be defined component-wise as $(P_B)_{i,j} := \delta_{\{ i = j \}} \delta_{\{ i \leq k\}}$, for $i=1,\dots,n$ and $j=1,\dots,k$, and has the following structure:
\[
	P_B =
	\begin{pmatrix}
		\displaystyle I_k\\
		\displaystyle \mathbb{O} \\
	\end{pmatrix},
\]
where $I_k$ is the $k\times k$ identity matrix and $\mathbb{O} \in \R^{n-k \times k}$ is a matrix with all entries equal to zero.

In our first main result we present a closed-form expression for the optimal load-sharing factors of $k$ controllables.
\begin{theorem}[Optimal load-sharing between $k<n$ controllables] \label{thm:optk}
Consider a network with balanced nominal load profile $\bmo^T \bmu=0$ in which the nodes in $B=\{1,\dots,k\}$ have controllable loads. The solution of the optimization problem~\eqref{eq:optk} is the load-sharing vector $\a^* = (\widetilde{\a}^* \, \bm{0})$, with
\begin{equation}
\label{eq:ak}
	\widetilde{\a}^* =  \frac{1}{t_B} (L^+_{B})^{-1} \bmo +  \left (I_k - \frac{1}{t_B} (L^+_{B})^{-1}  \bJ \right ) (L^+_{B})^{-1}  P_B^T L^+\frac{\bS \bm{1}}{\sigma^2} \in \R^k,
\end{equation}
where $L_B^+$ is the $k \times k$ principal submatrix of $L^+$, i.e.,~$L_B^+ := P_B^T L^+ P_B$, and $t_B:=\bmo^T (L^+_{B})^{-1} \bmo >0$.\\
If all the nodes with stochastic load fluctuations have controllable loads, i.e., $S \subseteq B$, then
\begin{equation}\label{eq:speccase}
	\widetilde{\a}^* = \frac{\bS \bmo}{\sigma^2},
\end{equation}
and, in particular, $\widetilde{\a}^*_v = 0$ for every $v \in B \setminus S$.
\end{theorem}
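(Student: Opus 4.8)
The plan is to reduce the problem to a single‑constraint quadratic program in $\R^k$ and solve it with one Lagrange multiplier. The $n-k$ equality constraints $\a_v=0$ for $v\in V\setminus B$ are absorbed by parametrizing the feasible controls as $\a=P_B\widetilde{\a}$ with $\widetilde{\a}\in\R^k$. Substituting into~\eqref{eq:EHs} and using that $\bS$ and $L^+$ are symmetric, the objective collapses to $\frac{\sigma^2}{2}\widetilde{\a}^T L^+_B\widetilde{\a}-(P_B^T\bm{b})^T\widetilde{\a}+\text{const}$, where $L^+_B=P_B^T L^+ P_B$ and $\bm{b}=L^+\bS\bmo$, and the remaining constraint $\bmo^T\a=1$ becomes $\bmo^T\widetilde{\a}=1$ (with $\bmo\in\R^k$). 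A preliminary but essential step is to check that $L^+_B$ is positive definite: the kernel of $L^+$ is $\mathrm{span}(\bmo)$, and since $k<n$ no nonzero vector of the form $P_B\widetilde{\a}$ can equal a multiple of $\bmo\in\R^n$, so $\widetilde{\a}^T L^+_B\widetilde{\a}>0$ for $\widetilde{\a}\neq 0$. This guarantees that $L^+_B$ is invertible, that $t_B=\bmo^T(L^+_B)^{-1}\bmo>0$, and that the restricted objective is strictly convex, which delivers \emph{uniqueness} of the minimizer automatically.

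Next I would form the Lagrangian $\mathcal{L}(\widetilde{\a},\nu)=\frac{\sigma^2}{2}\widetilde{\a}^T L^+_B\widetilde{\a}-(P_B^T\bm{b})^T\widetilde{\a}-\nu(\bmo^T\widetilde{\a}-1)$. The stationarity condition $\sigma^2 L^+_B\widetilde{\a}=P_B^T\bm{b}+\nu\bmo$ gives $\widetilde{\a}=\frac{1}{\sigma^2}(L^+_B)^{-1}(P_B^T\bm{b}+\nu\bmo)$, and imposing $\bmo^T\widetilde{\a}=1$ pins the multiplier down as $\nu=(\sigma^2-\bmo^T(L^+_B)^{-1}P_B^T\bm{b})/t_B$. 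Substituting $\nu$ back and collecting terms yields the two pieces of~\eqref{eq:ak}: the term $\frac{1}{t_B}(L^+_B)^{-1}\bmo$ and the contribution built from $P_B^T L^+\bS\bmo/\sigma^2=P_B^T\bm{b}/\sigma^2$. The one nonobvious point is to recognize that $\bJ$ in~\eqref{eq:ak} is the all‑ones matrix $\bmo\bmo^T$, so that $(L^+_B)^{-1}\bJ(L^+_B)^{-1}P_B^T\bm{b}=(L^+_B)^{-1}\bmo\,(\bmo^T(L^+_B)^{-1}P_B^T\bm{b})$; with this identification the operator $I_k-\frac{1}{t_B}(L^+_B)^{-1}\bJ$ applied to $(L^+_B)^{-1}P_B^T\bm{b}/\sigma^2$ reproduces exactly the rearranged $\nu$‑correction, matching the stated formula.

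For the special case $S\subseteq B$, rather than specializing~\eqref{eq:ak} I would verify the candidate $\widetilde{\a}=P_B^T\bS\bmo/\sigma^2$ directly. Since all rows and columns of $\bS$ outside $S$ vanish and $S\subseteq B$, the vector $\bS\bmo$ is supported on the first $k$ coordinates, so $P_B P_B^T\bS\bmo=\bS\bmo$ and $\bmo^T P_B^T\bS\bmo=\bmo^T\bS\bmo=\sigma^2$; hence the candidate is feasible. Plugging it into the stationarity condition and using $L^+_B P_B^T\bS\bmo=P_B^T L^+ P_B P_B^T\bS\bmo=P_B^T L^+\bS\bmo=P_B^T\bm{b}$, the quantity $\sigma^2 L^+_B\widetilde{\a}-P_B^T\bm{b}$ vanishes, so stationarity holds with $\nu=0$. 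By the uniqueness established above this candidate must be the optimum, giving~\eqref{eq:speccase}; and since $(\bS\bmo)_v=0$ for $v\in B\setminus S$, we obtain $\widetilde{\a}^*_v=0$ there.

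The main obstacle is bookkeeping rather than ideas: keeping the embedding $P_B$, the principal submatrix $L^+_B$, and the ambient $n\times n$ matrices $L^+$ and $\bS$ consistent, and correctly reassembling the multiplier‑eliminated expression into the compact projector form of~\eqref{eq:ak}. The genuinely structural step, as opposed to pure algebra, is the positive‑definiteness of $L^+_B$, which relies crucially on $k<n$ and simultaneously underwrites the existence of $(L^+_B)^{-1}$, the positivity of $t_B$, and the uniqueness of the optimal control.
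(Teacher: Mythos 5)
Your proposal is correct, and for the general formula \eqref{eq:ak} it is essentially the paper's argument: the same reduction $\a = P_B\widetilde{\a}$ to a single-constraint quadratic program in $\R^k$, the same positive-definiteness argument for $L^+_B$ (resting on $k<n$, so that no nonzero $P_B\widetilde{\a}$ lies in the kernel $\mathrm{span}(\bmo)$ of $L^+$), and the same Lagrangian stationarity conditions. The only difference there is bookkeeping: the paper assembles the optimality conditions into a $(k+1)\times(k+1)$ block linear system and inverts it via the block matrix inversion formula (its Lemma~\ref{lem:block}), whereas you solve the stationarity equation for $\widetilde{\a}$ in terms of the multiplier and eliminate the multiplier through the constraint; the two computations are algebraically identical, yours being slightly more elementary since it needs no inversion lemma. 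Where you genuinely diverge is the special case $S \subseteq B$. The paper specializes the general formula: it writes $L^+$ in block form, computes $(L^+_B)^{-1}P_B^T L^+ = ( I_k \,|\, (L^+_B)^{-1}L^+_C)$ and hence $(L^+_B)^{-1}P_B^T L^+\bS = (\bS_B \,|\, \mathbb{O})$, and then watches the two $(L^+_B)^{-1}\bmo/t_B$ terms cancel in \eqref{eq:ak}. You instead verify that the candidate $P_B^T\bS\bmo/\sigma^2$ is feasible (using that $\bS\bmo$ is supported on $B$, so $P_BP_B^T\bS\bmo = \bS\bmo$ and $\bmo^TP_B^T\bS\bmo=\sigma^2$) and stationary with multiplier zero, then invoke the uniqueness already guaranteed by strict convexity. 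Your route is shorter, avoids the block computations entirely, and makes transparent why the graph structure drops out of the answer (the candidate annihilates the multiplier); the paper's route has the side benefit of producing the explicit identity for $(L^+_B)^{-1}P_B^T L^+$, but nothing in the theorem requires it. Both arguments are complete.
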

The involved expression~\eqref{eq:ak} for the optimal load-sharing factors reflects the interplay that exists between the network structure, the location of the controllable loads $B$, and the correlation structure of the load fluctuations in determining the losses.

In the special case where all the nodes with stochastic load fluctuations have controllable loads there is a nice interpretation for the optimal load-sharing factors: indeed $\a^*_i$ is proportional to how much the stochastic fluctuations of node $i$ contribute in relative terms to the variance of the total mismatch, since
\begin{align}
	\a^*_i &= \frac{1}{\sigma^2} \bm{e}_i^T \bS \bmo = \frac{1}{\sigma^2} \sum_{j=1}^n \bS_{i,j}  = \frac{\mathrm{Var}(\o_i) + \sum_{j \neq i } \mathrm{Cov}(\o_i,\o_j)}{\mathrm{Var}(\sum_{i=1}^n \o_i)} \nonumber\\
	&= \frac{\mathrm{Var}(\o_i) + \sum_{j \neq i } \mathrm{Cov}(\o_i,\o_j)}{\sum_{k=1}^n \left (\mathrm{Var}(\o_k) + \sum_{j \neq k} \mathrm{Cov}(\o_k,\o_j)\right )}. \label{eq:interpretation}
\end{align}

We further remark that in the case of i.i.d.~stochastic fluctuations in all the nodes, the second term in~\eqref{eq:ak} vanishes, since the vector $\Sigma \bm{1}$ lies in the null space of $L^+$ (being a multiple of $\bmo$), and, therefore, the optimal control is equal to
\[
	\widetilde{\a}^* = \frac{ (L^+_{B})^{-1} \bm{1} }{\bm{1}^T (L^+_{B})^{-1} \bm{1}}.
\]
In particular, it does not depend on the variance of the load fluctuations, but only on the network structure and on the location $B$ of the controllable loads, both encoded in the matrix $L^+_B$.

\subsection{Full controllability}
In this subsection we focus on the special case where the load is controllable in every node, i.e.,~$B=V$, which is not covered in Theorem~\ref{thm:optk}. Indeed, the proof method does not work in this scenario due to the non-invertibility of the graph Laplacian $L$, and for this reason is treated separately here.

The problem of minimizing the expected loss $\E \cH(\a)$ when all the nodes have controllable loads can be written as an optimization problem on $\R^n$ with a single constraint, namely
\begin{align}
\label{eq:opt}
	\min_{\a \in \R^n} \quad & \E \cH(\a)\\
	\text{s.t.} \quad & \bm{1}^T \a = 1. \nonumber
\end{align}

As mentioned earlier, thanks to Proposition~\ref{prop:Eha} we can immediately conclude that the optimal load-sharing vector does not depend on the vector $\bmu$, which appears only in the constant term in the equality~\eqref{eq:EH} for the expected loss.

The next theorem derives an analytical expression for the optimal solution of this optimization problem.
\begin{theorem}[Optimal load-sharing between $n$ controllables] \label{thm:optn}
Consider a network $G$ with $n$ nodes and a balanced nominal load profile $\bmo^T \bmu=0$. The solution of the optimization problem~\eqref{eq:opt} is the load-sharing vector $\a^*$ given by
\[
	\a^* = \frac{\bS \bmo}{\sigma^2}.
\]
\end{theorem}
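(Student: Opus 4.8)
The plan is to solve the constrained quadratic program \eqref{eq:opt} by Lagrange multipliers, exploiting the explicit form \eqref{eq:EHs}. Since by \eqref{eq:EH} the objectives $\E\cH(\a)$ and $\E\cH_s(\a)$ differ only by the constant $\frac12\bmu^T L^+\bmu$, I would minimize instead $\E\cH_s(\a)=\frac{\sigma^2}{2}\a^T L^+\a-\bmo^T\bS L^+\a+\frac12\tr(\bS L^+)$. Introducing a multiplier $\nu$ for the single constraint $\bmo^T\a=1$, the stationarity condition $\nabla_\a\big(\E\cH_s(\a)-\nu(\bmo^T\a-1)\big)=\bm0$ reads
\[
\sigma^2 L^+\a = L^+\bS\bmo+\nu\,\bmo,
\]
where I have used the symmetry of $L^+$ and $\bS$ to differentiate the linear term into $L^+\bS\bmo$.

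The key step is to pin down $\nu$. Because $G$ is connected, $\bmo$ spans the kernel of $L$ and hence of $L^+$, so $L^+\bmo=\bm0$ and equivalently $\bmo^T L^+=\bm0^T$. Left-multiplying the stationarity equation by $\bmo^T$ therefore annihilates both terms containing $L^+$ and leaves $0=\nu\,\bmo^T\bmo=\nu n$, whence $\nu=0$. The condition collapses to $L^+(\sigma^2\a-\bS\bmo)=\bm0$, i.e.\ $\sigma^2\a-\bS\bmo$ lies in $\ker L^+=\mathrm{span}\{\bmo\}$, so $\sigma^2\a=\bS\bmo+\g\,\bmo$ for some scalar $\g$. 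Imposing $\bmo^T\a=1$ and using $\bmo^T\bS\bmo=\sigma^2$ from \eqref{eq:trSJ} gives $\sigma^2=\bmo^T\bS\bmo+\g n=\sigma^2+\g n$, so $\g=0$ and $\a^*=\bS\bmo/\sigma^2$. A direct check confirms $\bmo^T\a^*=\sigma^2/\sigma^2=1$, so $\a^*$ is feasible.

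Finally I would argue that this critical point is the unique global minimizer, which is where the genuine subtlety of the full-controllability case lies. The Hessian $\sigma^2 L^+$ is only positive semidefinite, so the unconstrained problem is degenerate along $\ker L^+=\mathrm{span}\{\bmo\}$; this degeneracy is exactly why the argument of Theorem~\ref{thm:optk}, which inverts a principal submatrix of $L^+$, cannot be reused. The point is that the flat direction $\bmo$ is transverse to the feasible hyperplane, since $\bmo$ does not lie in its tangent space $\{\bm v:\bmo^T\bm v=0\}$ (as $\bmo^T\bmo=n\neq0$). Restricted to that tangent space $L^+$ is in fact positive definite: if $\bm v^T L^+\bm v=0$ then $\bm v\in\mathrm{span}\{\bmo\}$, which together with $\bmo^T\bm v=0$ forces $\bm v=\bm0$. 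Hence $\E\cH_s$ is strictly convex on the feasible set and its unique critical point $\a^*$ is the global minimizer. The main obstacle is thus not the computation but correctly handling the singular Hessian: one must verify that the constraint removes precisely the single degenerate direction $\bmo$, so that uniqueness is recovered despite $L^+$ being non-invertible.
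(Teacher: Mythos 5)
Your proof is correct, but it takes a genuinely different route from the paper's. The paper never introduces a Lagrange multiplier for this theorem: it expands $\a$ in an orthonormal eigenbasis $v_1=\bmo/\sqrt{n},v_2,\dots,v_n$ of $L^+$, observes that the constraint $\bmo^T\a=1$ pins the coefficient along $\bmo$ to $1/n$ while the objective is independent of that coefficient (precisely because $L^+\bmo=\bm{0}$), and thereby reduces \eqref{eq:opt} to an \emph{unconstrained}, coordinate-wise decoupled quadratic problem in the remaining coefficients $\b_2,\dots,\b_n$, solved by $\b_i^*=\kappa_i/\sigma^2$ with $\kappa_i=\bmo^T\bS v_i$; the closed form $\a^*=\bS\bmo/\sigma^2$ then drops out of the basis expansion $\sum_{i=1}^n\kappa_i v_i=\bS\bmo$. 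You instead stay in the original coordinates, write the KKT system, and exploit the kernel structure twice: $\bmo^T L^+=\bm{0}^T$ kills the multiplier, and $\ker L^+=\mathrm{span}\{\bmo\}$ together with $\bmo^T\bS\bmo=\sigma^2$ kills the residual kernel component $\g\,\bmo$. Your closing argument --- that $L^+$ is positive definite on the tangent space $\{\bm{v}:\bmo^T\bm{v}=0\}$, so the objective is strictly convex on the feasible hyperplane and the KKT point is the unique global minimizer --- is exactly the uniqueness justification that the paper obtains instead from its diagonal positive-definite Hessian $\sigma^2\,\mathrm{diag}(\l_2^{-1},\dots,\l_n^{-1})$ in the reduced variables. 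What your approach buys: it is shorter, it parallels the Lagrangian method the paper uses for Theorem~\ref{thm:optk} (showing that the full-controllability case requires only a kernel argument, not a change of method), and it adapts directly to variants such as the penalized problem \eqref{eq:opt_p}. What the paper's approach buys: complete elimination of the constraint and an explicit decoupling into scalar minimizations, at the cost of carrying the eigenbasis through the computation. Both proofs ultimately hinge on the same two structural facts, namely $\ker L^+=\mathrm{span}\{\bmo\}$ and $\bmo^T\bS\bmo=\sigma^2$.
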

The highlight of this result is that in the scenario where all nodes have controllable loads, the optimal control $\a^*$ does not depend on the graph structure, but only on the covariance structure of the fluctuations. The same interpretation as in the special case $S \subseteq B$ of Theorem~\ref{thm:optk} holds here for the optimal load-sharing factors: $\a^*_i$ is proportional to how much the stochastic fluctuations of node $i$ contribute in relative terms to the variance of the total mismatch, see~\eqref{eq:interpretation}. In particular, when a node $i$ does not have stochastic load fluctuations, then it is optimal not to use the controllable load in that node, since $\a^*_i=0$ in view of the fact that the $i$-th row of $\bS$ is identically zero.

It immediately follows from Theorem~\ref{thm:optn} that when the the stochastic load fluctuations are independent and identically distributed, the optimal load-sharing factors are all equal, namely
\[
	\a^* = \frac{1}{n} \bm{1}.
\]
Furthermore, the expected total loss due to stochastic fluctuations when using the optimal load-sharing factors rewrites as
\[
	\E \cH_s(\a^*) = \frac{1}{2} \left ( \tr(\bS L^+) - \frac{1}{\sigma^2} \bmo^T \bS L^+ \bS \bmo \right ).
\]
In this special case, the fact that $\E \cH_s(\a^*)\geq 0$ can equivalently be proved as follows:
\begin{align*}
	\bmo^T \bS L^+ \bS \bmo = \tr(\bS L^+ \bS J) & \leq \sqrt{ \tr((\bS L^+)^2)\tr((\bS \bJ)^2)} \\
	& \leq \sqrt{ \tr(\bS L^+)^2 \tr(\bS \bJ)^2} = \tr(\bS L^+)\tr(\bS \bJ) = \sigma^2 \tr(\bS L^+),
\end{align*}
where both the inequalities leverage in a crucial way that all the matrices $\bS$, $\bJ$, $L^+$ are positive semi-definite. The first inequality follows from the fact that in the space of positive semi-definite matrices trace is a proper inner-product and thus obeys Cauchy-Schwarz inequality,
\[
	\tr(A B) \leq \sqrt{ \tr(A^2) \, \tr(B^2)} \qquad \forall \, A, B \text{ positive semi-definite matrices.}
\]
The second inequality follows from the fact that $\tr(A^2) \leq \tr(A)^2$ for any positive semi-definite matrix $A$, obtained by applying Cauchy-Schwarz inequality with $A=B$.

\section{Scaling properties of the expected total loss}
\label{sec4}
In this section we explore the relation between the expected total loss and the number of controllable loads. Even if the intuition suggests that the expected total loss should be a decreasing function in the number of controllable loads, this fact may not be true in general, as the total loss depends both on the location of the controllable loads as well as on the the load-sharing factors. For instance, in Section~\ref{sec5} we present a counterexample of a network in which by adding the one controllable load and readjusting the load-share factors to be all equal, the expected total power loss increases.

To get rid of these heterogeneities and obtain a more transparent result for the impact of the number of controllable loads, we calculate the expected total loss for a fixed number of controllable loads \textit{averaging} on all their possible locations and assuming they share equally the load. Consider an integer $1\leq k \leq n$ and denote by $\Ak \subset \R^n$ the collection of load-sharing vectors with exactly $k$ non-zero identical entries (and thus equal to $1/k$, in view of~\eqref{eq:asum}), namely
\[
	\Ak:=\left \{ \frac{1}{k}\sum_{i \in B} \bm{e}_i ~:~ B \subseteq V, \, |B|=k \right \},
\]
where $\bm{e}_i \in \R^n$ is the vector with the $i$-th entry equal to $1$ and zero elsewhere.

Let $\cH_k$ denote the expected total loss due to stochastic load fluctuations averaging over all their possible locations of $k$ controllable nodes share equally the load, i.e.,
\[
	\cH_k := \frac{1}{|\Ak|} \sum_{\a \in \Ak} \E \cH_s(\a).
\]
This average consists of $|\Ak|= \binom{n}{k}$ terms, one for each possible arrangements of $k$ controllable nodes in a network with $n$ nodes. The following theorem states an explicit expression for $\cH_k$ that makes the dependence on the number of controllable node $k$ very explicit, showing in particular that $\cH_k$ is, up to a constant, proportional to $1/k$.

\begin{theorem}[Average total loss with $k$ controllable loads]\label{thm:aveHk}
Consider a network $G$ of $n$ nodes with balanced nominal profile load, i.e.,~$\bm{1}^T \bmu=0$. Then,
\[
	\cH_k = C_1 + \frac{C_2}{k},
\]
where $C_1, C_2 \in \R$ are two constants that do not depend on $k$ given by
\[
	C_1=C_1(G):= \frac{1}{2} \tr(\bS L^+) -  \sigma^2 \frac{\tr(L^+)}{2 n (n-1)}
	\quad \text{ and } \quad
	C_2=C_2(G):= \sigma^2  \frac{\tr(L^+) }{2 (n-1)}.
\]
\end{theorem}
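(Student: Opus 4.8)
The plan is to substitute the explicit quadratic expression~\eqref{eq:EHs} for $\E\cH_s(\a)$ into the definition of $\cH_k$ and exploit the linearity of the average over $\Ak$. Writing a generic element of $\Ak$ as $\a_B=\frac1k\sum_{i\in B}\bm{e}_i$ for $B\subseteq V$ with $|B|=k$, the three terms of~\eqref{eq:EHs} (quadratic, linear, and constant in $\a$) can be averaged separately over the $\binom{n}{k}$ subsets. The constant term $\frac12\tr(\bS L^+)$ is unaffected by the averaging and passes straight into $C_1$.

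For the linear term, I would first compute the average load-sharing vector. Since coordinate $i$ of $\a_B$ equals $1/k$ exactly when $i\in B$, and a fixed node lies in a fraction $k/n$ of all size-$k$ subsets, the average of $\a_B$ over $\Ak$ is $\frac1n\bmo$. Hence the averaged linear term is $-\bmo^T\bS L^+\big(\tfrac1n\bmo\big)=-\tfrac1n\,\bmo^T\bS(L^+\bmo)=0$, because $\bmo$ spans the null space of $L^+$. The entire linear contribution therefore vanishes.

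The quadratic term carries the real content. Expanding $\a_B^TL^+\a_B=\frac1{k^2}\sum_{i,j\in B}L^+_{i,j}$ and summing over $B$, I would count how often each entry of $L^+$ appears: a diagonal entry $L^+_{i,i}$ is counted once for every size-$k$ set containing $i$, i.e.\ $\binom{n-1}{k-1}$ times, while an off-diagonal entry $L^+_{i,j}$ with $i\neq j$ is counted $\binom{n-2}{k-2}$ times. The key simplification is that $\sum_{i,j}L^+_{i,j}=\bmo^TL^+\bmo=0$, so the off-diagonal mass equals $\sum_{i\neq j}L^+_{i,j}=-\tr(L^+)$. Collecting the diagonal and off-diagonal contributions gives $\big(\binom{n-1}{k-1}-\binom{n-2}{k-2}\big)\tr(L^+)=\binom{n-2}{k-1}\tr(L^+)$ by Pascal's rule, and dividing by $k^2\binom{n}{k}$ and simplifying the binomials yields $\frac{n-k}{k\,n(n-1)}\tr(L^+)$. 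Multiplying by $\sigma^2/2$ and splitting $\frac{n-k}{k}=\frac nk-1$ separates a $1/k$ piece from a constant piece.

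Assembling the three contributions, the $1/k$ piece is exactly $C_2=\sigma^2\tr(L^+)/(2(n-1))$, and the constant pieces combine to $C_1=\frac12\tr(\bS L^+)-\sigma^2\tr(L^+)/(2n(n-1))$, as claimed. The main obstacle is purely the combinatorial bookkeeping in the quadratic term---getting the occurrence counts of diagonal versus off-diagonal entries right and simplifying the ratios of binomials---but the computation is disciplined by the single structural fact $L^+\bmo=\bm{0}$, which both annihilates the linear term and converts the off-diagonal sum into $-\tr(L^+)$.
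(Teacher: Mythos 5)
Your proposal is correct and follows essentially the same route as the paper's own proof: decompose $\E\cH_s(\a_B)$ via~\eqref{eq:EHs} into constant, linear, and quadratic parts, kill the linear part using $L^+\bmo=\bm{0}$, and evaluate the quadratic part by counting occurrences of diagonal entries ($\binom{n-1}{k-1}$ times) versus off-diagonal entries ($\binom{n-2}{k-2}$ times), converting the off-diagonal mass to $-\tr(L^+)$ via $\bmo^TL^+\bmo=0$ and combining with Pascal's rule to get $\binom{n-2}{k-1}\tr(L^+)$, then simplifying $\binom{n-2}{k-1}/\bigl(k^2\binom{n}{k}\bigr)=\frac{n-k}{k\,n(n-1)}$. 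The only cosmetic difference is that you average the vector $\a_B$ to $\frac{1}{n}\bmo$ before applying the linear functional, whereas the paper sums $L^+\bigl(\sum_{i\in B}\bm{e}_i\bigr)$ over subsets first; these are the same computation.
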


Both the constants $C_1(G)$ and $C_2(G)$ depend on the graph structure via $L^+$, on its size $n$, and on the covariance matrix $\bS$. We remark that the constant $C_2$ is always strictly positive, as $\tr(L^+)>0$ ($L^+$ being a positive semi-definite matrix) and $\sigma^2 >0$, in view of~\eqref{eq:trSJ}.

We are interested in understanding how the expected total loss scales for large graphs. Assume we can take a sequence of graphs $\{G_n\}_{n \in \N}$ of growing size, $|V_n|=n$, and of covariance matrices $\{ \bS_n\}_{n \in \N}$ with total variance $\sigma_n^2=\bmo^T \bS_n \bmo$, so that the limit
\[
	\gamma:= \lim_{n \to \infty} \frac{ (n-1) \, \tr(\bS_n L_n^+)}{ \sigma_n^2 \, \tr(L_n^+)}
\]
exists. Note that the fact that inequality $\cH_{n} \geq 0$ holds for every $n \in \N$ (see Proposition~\ref{prop:Eha}) guarantees that $\gamma \geq 0$. Under these assumptions, Theorem 1 readily implies that as the graph size $n$ grows large the relative gain of having $k$ controllable loads with respect to a single one scales as
\[
	\lim_{n \to \infty} \frac{\cH_k}{\cH_1} 
	= \left (\frac{1}{1+\gamma^{-1}} \right )+  \left (\frac{1}{1+\gamma} \right )\frac{1}{k}.
\]
In the scenario where the load fluctuations are independent and identically distributed, \textit{regardless of the graph structure}, this asymptotic scaling reads
\begin{equation}
\label{scalinglaw}
	\lim_{n \to \infty} \frac{\cH_k}{\cH_1} = \frac{1}{2} + \frac{1}{2 k}.
\end{equation}
as
\[
	\gamma = \lim_{n \to \infty} \frac{ (n-1) \, \tr(\bS_n L_n^+)}{ \sigma_n^2 \, \tr(L_n^+)} =  \lim_{n \to \infty} \frac{(n-1) \, \sigma_n^2 \, \tr(L_n^+)}{n \, \sigma_n^2 \, \tr(L_n^+)} =  \lim_{n \to \infty} \frac{n-1}{n} =1.
\]

In Subsection~\ref{sub:scaling} we show numerically that the way the expected total loss scales on average with the number of controllable loads as stated in Theorem~\ref{thm:aveHk} is pretty accurate in general, even without averaging over all possible locations of the controllable loads.

\section{Extensions}
\label{newsec}
\subsection{Generalized model}
The optimization problems \eqref{eq:optk} and \eqref{eq:opt} considered in Section~\ref{sec3} focus on the network efficiency, but do not prevent the excessive usage of specific controllable loads nor account for economic factors. In this section we present a more general optimization problem that addresses both these issues and that further includes additional constraints typical of the OPF problem, which network operators use to set the operational setpoints of power systems every 5-15 minutes.

Let $P$ be positive definite $n\times n$ matrix, $R \in \R^{n \times n}$, and $\bm{q} \in \R^n$. Introducing a non-negative real number $\xi \geq 0$, we can consider the following generalized optimization problem:
\begin{subequations}\label{eq:opt_p}
\begin{align}
	\min_{\a \in \R^n} & \quad \E \cH(\a) + \xi \, \a^T P \a \label{eq:opt_p1} \\
	\quad \text{s.t.} & \quad \bm{1}^T \a = 1, \label{eq:opt_p2}\\
	&\quad \a_{v} =0, \quad \forall\, v \in V \setminus B, \label{eq:opt_p3}\\
	&\quad R \a \leq \bm{q} \label{eq:opt_p4}\\
	&\quad |\bm{f}_\ell(\a)| \leq \bm{f}_\ell^\mathrm{max}, \quad \forall\, \ell \in E. \label{eq:opt_p5}
\end{align}
\end{subequations}
This optimization problem differs from that in~\eqref{eq:optk} in two ways, namely its objective function~\eqref{eq:opt_p1} has an extra term and it has two new sets of constraints~\eqref{eq:opt_p4}-\eqref{eq:opt_p5}.

The term $\a^T P \a$ can be seen as a \textit{penalty} or \textit{cost} term and the scalar $\xi \geq 0$ weights its relative importance with respect to the average total loss $\E \cH(\a)$. More specifically, by taking $P$ to be a diagonal matrix with non-negative entries, the additional term $\a^T P \a = \sum_{i=1}^n P_{i,i} \a_i^2$ in the objective function penalizes an excessive usage (i.e., large load-sharing factor) of the controllable loads with large coefficients $P_{i,i}$. This means that we can model less flexible or more costly controllable loads, by tuning the corresponding terms $P_{i,i} >0$ accordingly.


The additional constraint~\eqref{eq:opt_p4} can be used to set any desired upper and/or lower bounds on the load-sharing factor of any particular node. In particular, we could restrict the load-sharing factors for some of the nodes $v \in B$ to be non-negative, $\a_v \geq 0$, or their absolute values $|\a_{v}| < \varepsilon_v$ with ad-hoc constants $\varepsilon_v >0$.

Lastly, the constraint~\eqref{eq:opt_p5} captures the physical limits of the lines, imposing the power flow $\bm{f}_\ell(\a)$ on any given network line $\ell \in E$ stays below the corresponding \textit{capacity} $\bm{f}_\ell^\mathrm{max} >0$ of that line. As outlined in Section~\ref{sec2}, using the linear approximation~\eqref{eq:fLp}, the power flows $\bm{f}(\a) = \Lambda L^+ \pp(\a)$ are linear in the load profile $\pp(\a)$ and, ultimately, linear also in the load-sharing factors $\a$.

Clearly~\eqref{eq:opt_p} is still a convex optimization problem: all the constraints are still linear in $\a$ and the objective function rewrites as a quadratic form with the matrix $L^+ + \xi P$ appearing in the leading term that is positive definite in view of the assumption made on the matrix $P$ and Proposition~\ref{prop:Eha}. It is impossible, however, to derive the optimal control $\a^*$ in closed form for the generalized problem~\eqref{eq:opt_p} and thus we cannot explore its structure as we did in the previous sections, but we present some numerical results in next section.

Nonetheless, we argue that the original problem~\eqref{eq:optk} still gives valuable insight as the line capacity limits appearing in~\eqref{eq:opt_p5} are often redundant. Indeed, the OPF problem automatically sets a nominal setpoint $\bmu$ so that all the nominal power flows $\bm{f}$ are within their limits and the optimal control $\a^*$ should decrease the largest line flows (in absolute value) $\bm{f}(\a)$ even further, as it is intuitive from the expression $\cH(\a)=\frac{1}{2} \sum_{\ell \in E} \beta_\ell^{-1} \bm{f}_\ell^2(\a)$, cf.~Section~\ref{sec2}.

Moreover, even if we set a specific load-sharing factor $\a_v$ to be non-negative or smaller than $\varepsilon_v$ with constraint~\eqref{eq:opt_p4}, the controllable load in that node should still be capable of both storing and to outputting a possibly very large amount of power, as we outlined at the end of Subsection~\ref{sub21}. Alternatively, a chance-constraint for the stochastic effort $\a_i \sum_j \o_j$ of node $i$ could be included in the optimization problem like it has been done in~\cite{BCH14}. However, additional assumptions on the multivariate distribution of $\o$ must be made to rewrite the constraint so that the resulting optimization problem is still convex. Other chance-constraints for line limits could be obtained using concentration inequalities as in~\cite{Nesti2017} or using decay rates and large deviation theory as in~\cite{Nesti2018}.

We conclude by deriving the optimal control $\a^*$ when considering the optimization problem with augmented objective function~\eqref{eq:opt_p1} in the special case where all the nodes have controllable loads, i.e., $B=V$, but ignoring the constraints~\eqref{eq:opt_p4}-\eqref{eq:opt_p5}. In this scenario, the optimal solution can be still calculated in closed form as
\[
	\a^* = \Big (\frac{\sigma^2}{2} L^+ + \xi P \Big)^{-1} \left(I + \frac{1}{t} \bJ \Big (\frac{\sigma^2}{2} L^+ + \xi P \Big)^{-1} \right)  L^+ \bS \bm{1} +   \frac{1}{t} \Big (\frac{\sigma^2}{2} L^+ + \xi P \Big)^{-1} \bm{1},
\]
with $t:= \bm{1}^T \Big (\frac{\sigma^2}{2} L^+ + \xi P \Big)^{-1} \bm{1} >0$. This expression can be obtained in a similar way of that in Theorem~\ref{thm:optk}, leveraging the fact that the matrix $L^+ + \xi P$ is positive definite.

\subsection{Variance of the aggregated total loss}
It could be of interest choosing the load-sharing factors $\a_1,\dots,\a_n$ not only to minimize the expected aggregated total loss $\E \cH(\a)$, but also its \textit{variance} $\mathrm{Var} (\cH(\a))$. Aiming to possibly include this in our optimization, we need a closed-form expression for the variance of a quadratic form of a random vector. Unfortunately, this is known only in the case in which the random vector has a multivariate Gaussian distribution. Under this additional assumption, the variance of the aggregated total loss can be calculated explicitly as follows
\begin{align*}
	\mathrm{Var} (\cH(\a)) &= 
	 \mathrm{Var} (\cH_s(\a))  = \mathrm{Var} \left (\frac{1}{2} \o^T \Sa^T L^+ \Sa \o + \bmu^T L^+ \Sa \o \right )\\
	 &= \frac{1}{4} \mathrm{Var} \left (\o^T \Sa^T L^+ \Sa \o \right ) +  \mathrm{Var} \left (\bmu^T L^+ \Sa \o \right ) + \mathrm{Cov}\left  (\o^T \Sa^T L^+ \Sa \o, \bmu^T L^+ \Sa \o \right ) \\
	&= \frac{1}{2} \mathrm{tr} \left ( (C_{\a}^T L^+ C_{\a} \Sigma)^2\right ) + \bmu^T L^+ C_{\a} \Sigma C_{\a}^T L^+ \bmu.
\end{align*}
The last step follows from standard results for quadratic forms of multivariate Gaussian random variables~\cite[Theorems 5.2c and 5.2d]{Rencher2008} and the fact that $\E \o = \bm{0}$.

Following similar steps to those in the proof of Proposition~\ref{prop:Eha} in Appendix~\ref{app:a}, the expression for $\mathrm{Var} (\cH(\a))$ can further rewritten as a polynomial of degree four in the load-sharing factors $\a_1,\dots,\a_n$. This new term could potentially be included in the objective function of the optimization problem~\eqref{eq:opt_p}, but the resulting problem might is not guaranteed to still be convex in general.

\section{Numerical examples}
\label{sec5}
In this section we present some numerical results. First of all, in Subsection~\ref{sub61}, we compare the performance of our static optimal load-sharing factors $\a^*$ with that of idealized real-time load-sharing factors $\a(\o)$ that dynamically respond to fluctuations. Then, in Subsections~\ref{sub:covariance} and~\ref{sub:position} we study the impact of the covariance matrix and of the relative position of the nodes affected by fluctuations and those housing controllable loads. We present two examples to illustrate that (i) optimal load-sharing factors have opposite signs in Subsection~\ref{sub:negative} and that (ii) expected total loss is not always decreasing in the number of controllable loads in Subsection~\ref{sub:nonmonotone}. Lastly,  in Subsection~\ref{sub:scaling} we corroborate the accuracy of the scaling limit derived in Section~\ref{sec4}.
\subsection{Real-time adaptive load-sharing factors}
\label{sub61}
We briefly consider here the scenario in which is possible to (i) have exact knowledge about the realized fluctuations $\o$ and (ii) to have instantaneous coordination between controllable loads. In this setting, a centralized controller could dynamically change the load-sharing factors depending on the realization of the noise $\o$, trying to minimize the total loss. In this case, the total aggregated power loss $\widetilde{\cH}(\a,\o) = \frac{1}{2} (\bmu - \Sa \o)^T L^+ (\bmu - \Sa \o)$ is not a random variable, but simply a quadratic form in $\a$. Thus, for every realization of the noise $\o$ we can find the optimal load-sharing factors $\a^*(\o) \in \R^n$ that solve
\begin{align}
	\min_{\a \in \R^n} \quad & \widetilde{\cH}_s(\a,\o) \nonumber \\
	\text{s.t.} \quad &  \bm{1}^T \a = 1, \label{eq:opt_realized}\\
	\quad & \a_{v} =0, \quad \forall\, v \in V \setminus B. \nonumber
\end{align}
From our previous analysis, it is easy to see that this is still a convex optimization problem in $\a$ with a quadratic objective function and linear equality constraints. Its solution can be obtained from the corresponding Karush-Kuhn-Tucker (KKT) conditions
\[
	\begin{cases}
		-(\bm{1}^T \o) L^+ (\bmu + \o - (\bm{1}^T  \o) \a ) + \pi \bm{1} + \bm{\nu} = \bm{0}\\
		\bm{1}^T \a = 1,\\
		\a_{v} =0, \quad \forall\, v \in V \setminus B,
	\end{cases}
\]
where $\pi \in \R$ is the dual variable corresponding to the constraint $\bm{1}^T \a = 1$ and $\bm{\nu} \in \R^n$ is the vector whose non-zero entries are equal to the dual variables of the constraints $\a_{v} =0$, i.e., $\bm{\nu}_v = \nu \in \R$ if $v \in V\setminus B$ and $0$ otherwise.
In the special case $B=V$, the vector $\bm{\nu}$ is equal to $\bm{0}$ and the system of equations above can be solved explicitly leveraging the spectral structure of $L^+$ to obtain
\[
	\a_i = (\bm{1}^T \o)^{-1} (\bmu_i + \o_i) = \frac{\bmu_i + \o_i}{\sum_{j=1}^n \o_j},
\]
This result corresponds to the unrealistic scenario in which the response of the controllable loads is such that the net load profile $\bm{p}_i(\a)$ becomes zero in every node (and no power flows in the lines).

We focus thus the case where $B \subsetneq V$ and compare numerically the performance of the optimal static load-sharing factors $\a^*$ against the dynamically changing $\a^*(\o)$ for the same realizations of the fluctuations $\o$. We consider the IEEE 14-bus test network, a standard test case used for simulations in the power grid literature~\cite{Zimmerman2011}, with correlated noise, see Figure~\ref{fig:dyn}.
\begin{figure}[!h]
	\centering
	\subfloat[][The topology of the IEEE 14-bus test network]{\includegraphics[scale=0.24]{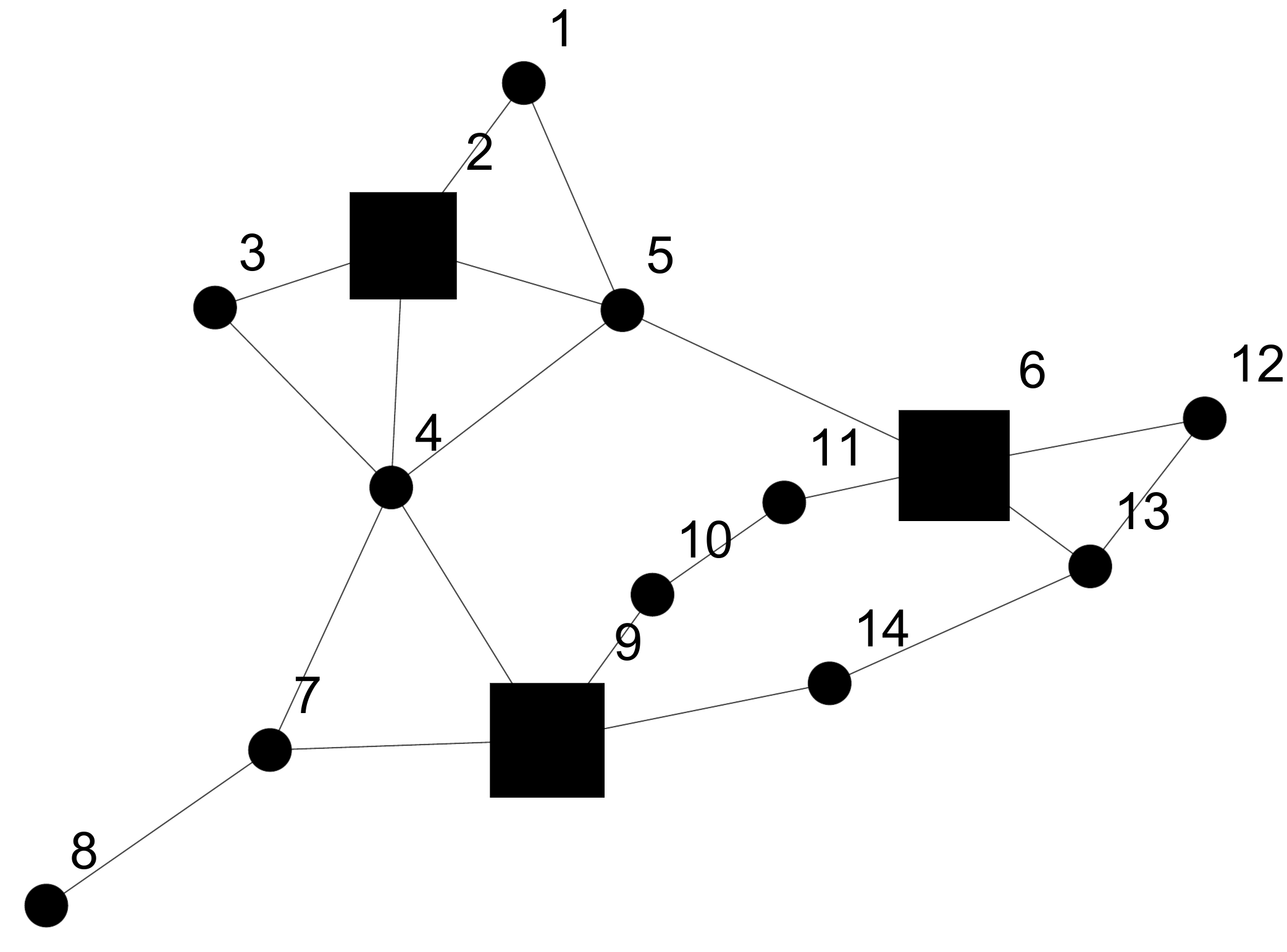}}
	\hspace{0.35cm}
		\subfloat[][Scatter plot of the correlation matrix $\bS$]{\includegraphics[scale=0.5]{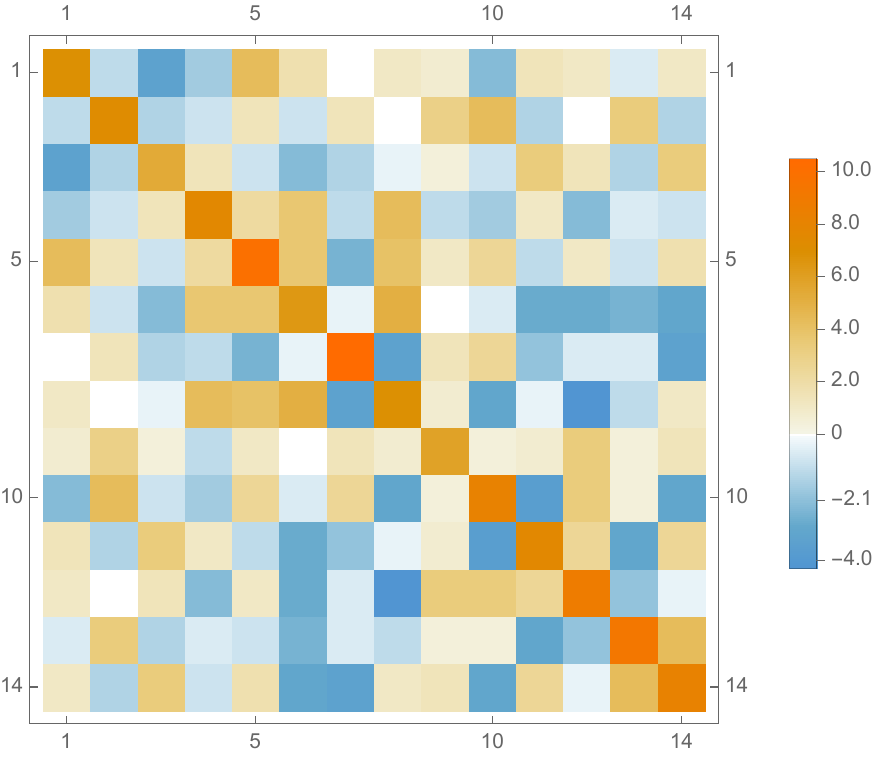}}
	\vspace{0.1cm}
	\caption{The IEEE 14-bus test network with three controllable loads $B=\{2,6,9\}$ (depicted as squares) and all 14 nodes affected by stochastic fluctuations, i.e., $S=V$, with correlations are captured by the matrix $\bS$}
	\label{fig:dyn}
\end{figure}
\FloatBarrier
The resulting static optimal control for this network and correlation structure is $\widetilde{\a}^*=(\widetilde{\a}^*_2,\widetilde{\a}^*_6,\widetilde{\a}^*_9)=(0.3510,0.2805,0.3685)$. We sample $100$ realization of $\o$ from a multivariate Gaussian distribution with mean $\bm{0}$ and correlation matrix $\bS$ and for each of them we calculate the total aggregated loss using $\widetilde{\a}^*$ and finding the optimal dynamic load-sharing factors $\widetilde{\a}^*(\o)$. For these realization, the average total loss using the dynamical control is equal to $18250.8$, while it result in a $20\%$ higher average total loss, namely $21985.1$, when using the static control. However, as shown in Figure~\ref{fig:trace} below, the dynamic control $\widetilde{\a}^*(\o)$ takes incredibly large (both positive and negative) values, which are highly unrealistic to be implemented in practical settings.
\begin{figure}[!h]
	\centering
	\subfloat[][$\widetilde{\a}^*_2(\o)$
	]{\includegraphics[scale=0.415]{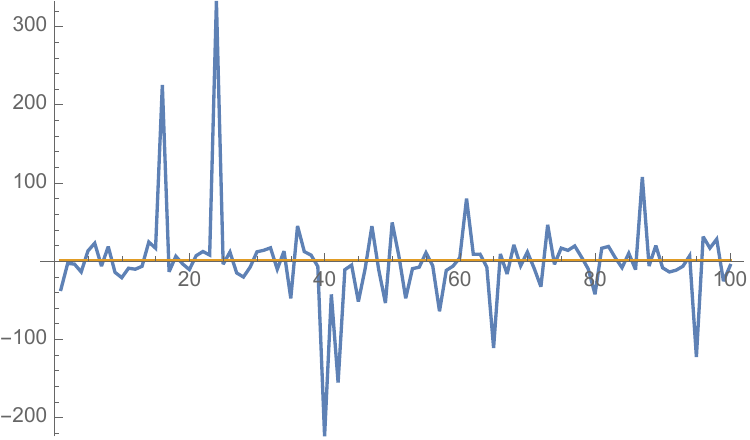}}
	\hspace{0.2cm}
	\subfloat[][$\widetilde{\a}^*_6(\o)$
	]{\includegraphics[scale=0.415]{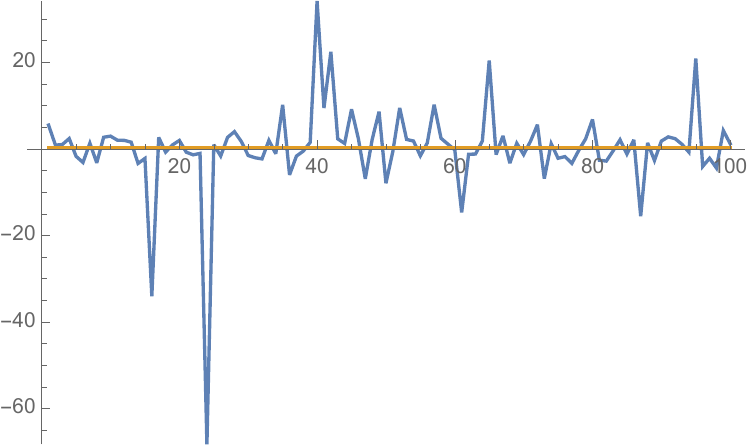}}
	\hspace{0.2cm}
	\subfloat[][$\widetilde{\a}^*_9(\o)$
	]{\includegraphics[scale=0.415]{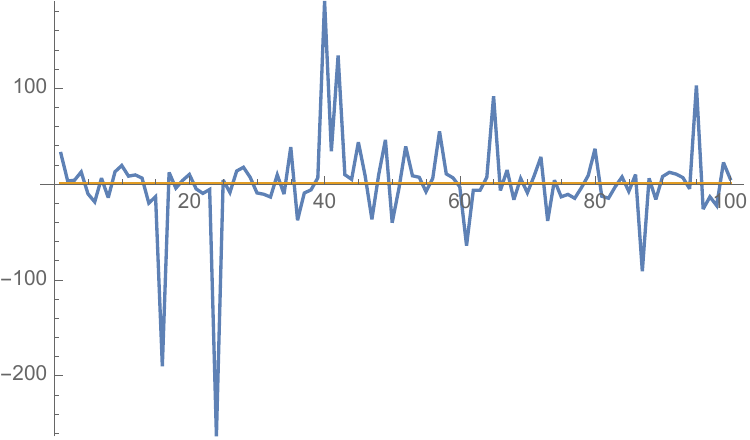}}
	\vspace{0.1cm}
	\caption{Traces of the optimal dynamic load-sharing factors 
	for the three controllable loads $B=\{2,6,9\}$ in the IEEE 14-bus test network of Figure~\ref{fig:dyn} over $100$ realizations of the fluctuations $\o$ }
	\label{fig:trace}
\end{figure}
\FloatBarrier

\subsection{Impact of covariance structure}
\label{sub:covariance}
The next two figures illustrate how the covariance structure influence the optimal control for a small network of $14$ nodes, the IEEE 14-bus test network. Figure~\ref{fig:ieee14}(a) covers the case in which all the nodes have stochastic load fluctuations, i.e.,~$S=V$, while in Figure~\ref{fig:ieee14}(b) we present a scenario where $S\subsetneq V$, where only the nodes in black are affected by fluctuations. In both figures the nodes with controllable loads are $B=\{2,6,9\}$ and are drawn as squares.
\begin{figure}[!h]
	\centering
	\subfloat[][All nodes affected by stochastic fluctuations, $S=V$]{\includegraphics[scale=0.22]{ieee14_s1.pdf}}
	\hspace{0.35cm}
		\subfloat[][Only $9$ nodes (in black) have stochastic fluctuations]{\includegraphics[scale=0.22]{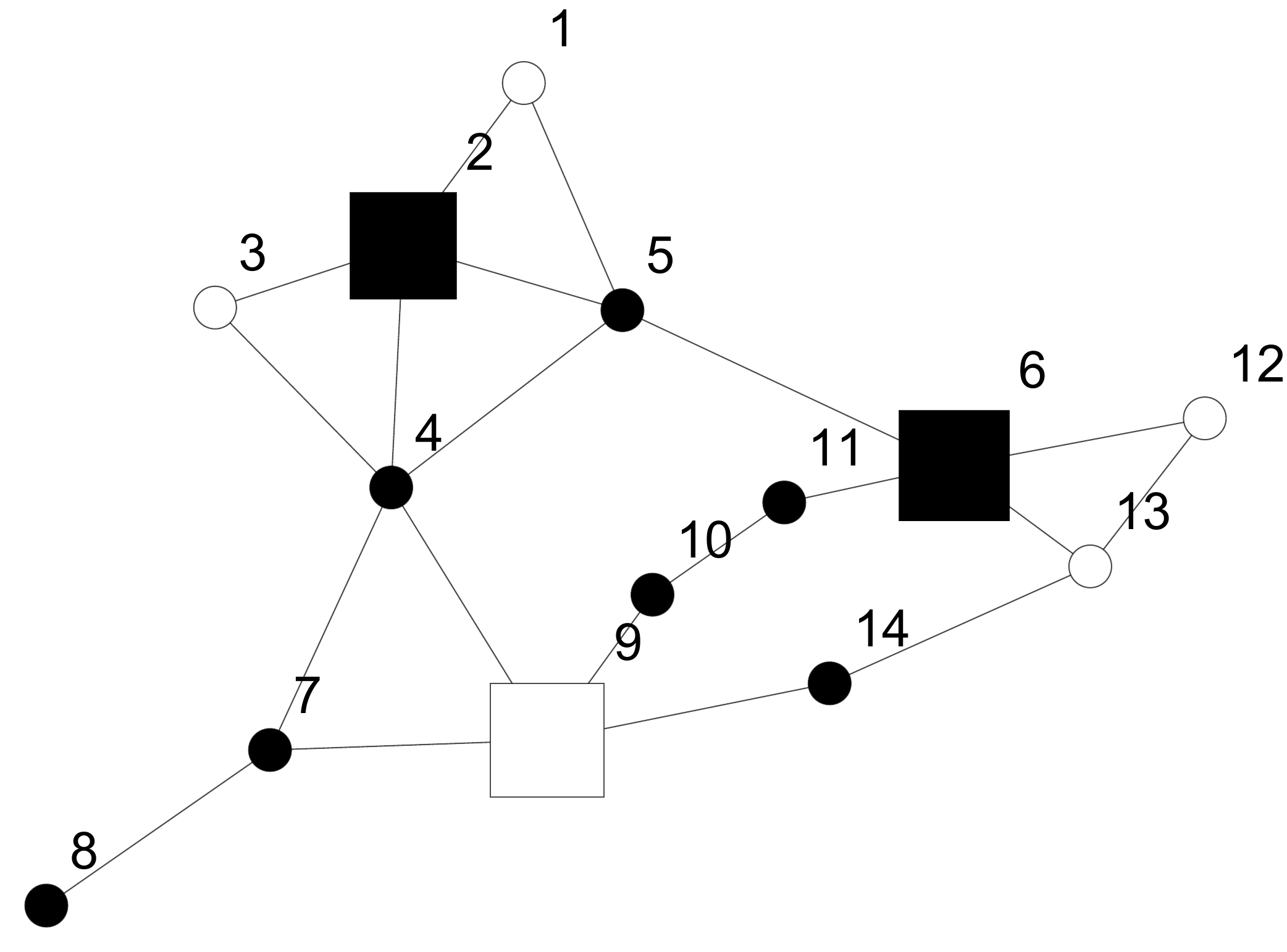}}%
	\vspace{0.1cm}
	\caption{The IEEE 14-bus test network with three controllable loads $B=\{2,6,9\}$ (depicted as squares)}\label{fig:ieee14}
\end{figure}
\FloatBarrier
For these two networks, we compare the optimal static control $\a^*$ for the three controllable loads in the various cases in which we vary the covariance structure of the noise, see Figure~\ref{fig:barchart}.
The correlation matrix $\bS_\mathrm{c}$ in Figure~\ref{fig:covariance}(c) has been generated at random, the correlation matrix $\bS_\mathrm{b}$ used in case (b) is obtained using the diagonal entries of $\bS_\mathrm{c}$, and that in case (a) is equal to $\bS_\mathrm{a}=\sigma^2 I_{14}$, with $\sigma^2=\bmo^T \bS_\mathrm{c} \bmo = \bmo^T \bS_\mathrm{b} \bmo = 255.75$. In this way, all the three correlation matrices have been rescaled so that their total variance is equal in all three cases, namely $\sigma_\mathrm{a}^2=\sigma_\mathrm{b}^2 = \sigma_\mathrm{c}^2$. The correlation matrices for the second network are displayed in Figure~\ref{fig:covariance2}; they have been obtained analogously, but only a subset of $|S|=9$ nodes is affected by stochastic fluctuations, the other one have corresponding rows and columns equal to zero.
\begin{figure}[!h]
	\centering
	\subfloat[][Values of $\widetilde{\a}^*$ for the three cases described in Figure~\ref{fig:covariance}]{\includegraphics[scale=0.51]{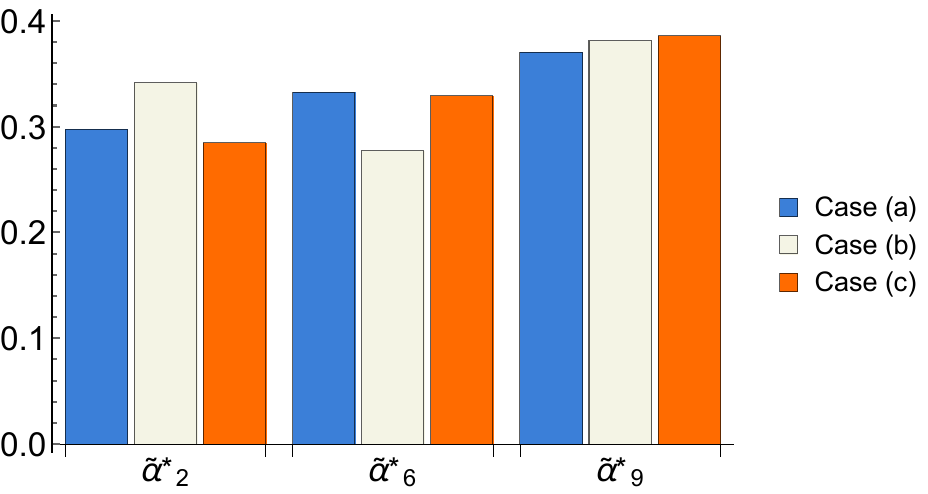}}
	\hspace{0.35cm}
	\subfloat[][Values of $\widetilde{\a}^*$ for the three cases described in Figure~\ref{fig:covariance2}]{\includegraphics[scale=0.51]{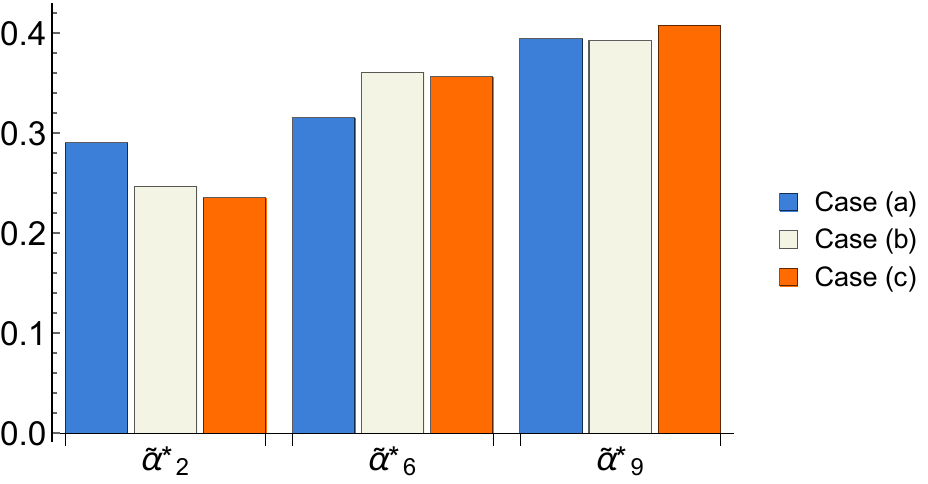}}
	\vspace{0.1cm}
\caption{Values of the optimal load-sharing factors $\widetilde{\a}^*$ with different noise correlation structures. The left chart refers to the network in Figure~\ref{fig:ieee14}(a) with stochastic fluctuations everywhere, the right one to the network with only $9$ stochastic nodes in Figure~\ref{fig:ieee14}(b)}
	\label{fig:barchart}
\end{figure}
\begin{figure}[!h]
	\centering
	\subfloat[][$\bS_\mathrm{a}$ (i.i.d.~load fluctuations)\\ ${\color{white} """} \widetilde{\a}^*=(0.2974, 0.3324, 0.3702)$]{\includegraphics[scale=0.4]{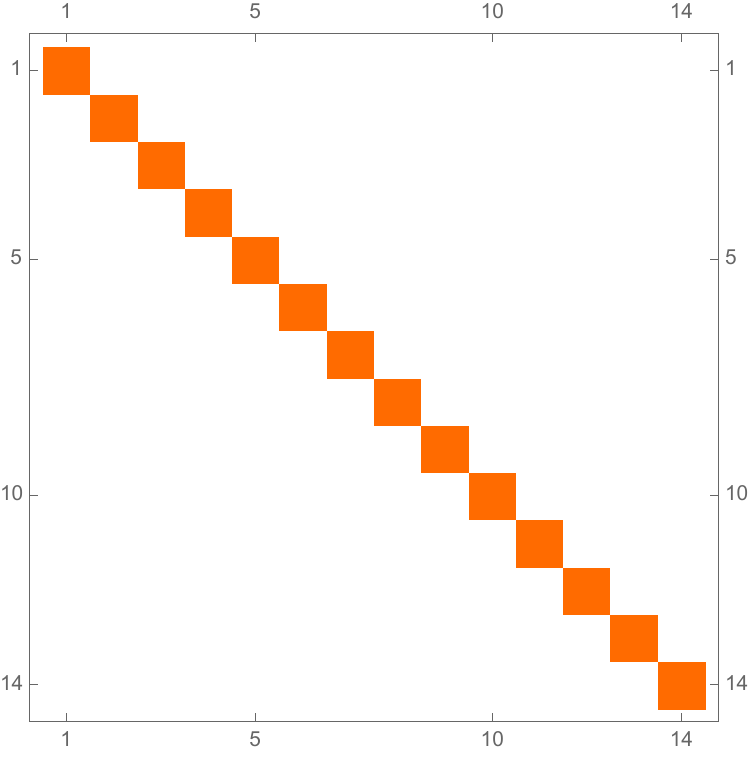}}
	\hspace{0.35cm}
	\subfloat[][$\bS_\mathrm{b}$ (independent but not identically distributed load fluctuations)\\ ${\color{white} """} \widetilde{\a}^*=(0.3414, 0.2774, 0.3812)$]{\includegraphics[scale=0.4]{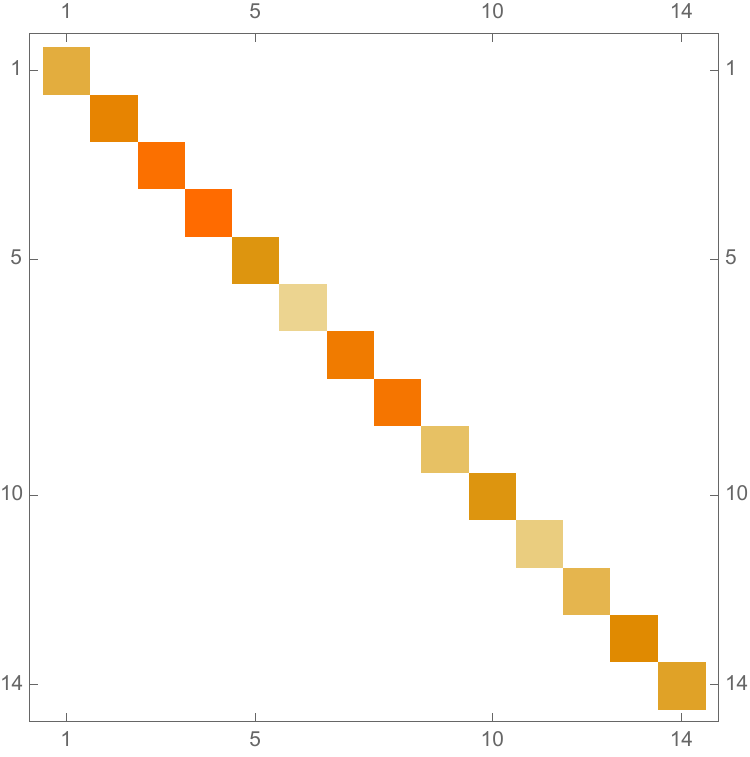}}
	\hspace{0.35cm}
	\subfloat[][$\bS_\mathrm{c}$ (correlated load fluctuations)\\${\color{white} """} \widetilde{\a}^*=(0.2849, 0.3294, 0.3857)$]{\includegraphics[scale=0.4]{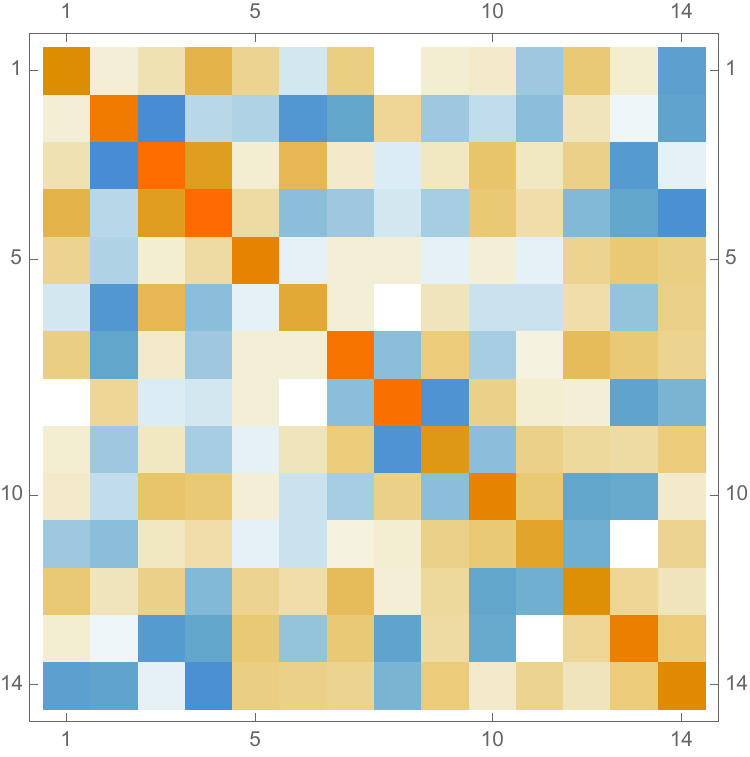}}
	\vspace{0.1cm}
	\caption{Scatter plots of the correlation matrices in three different scenarios and resulting optimal load-sharing factors for the three controllable loads in $B=\{2,6,9\}$ for the network in Figure~\ref{fig:ieee14}(a).}
	\label{fig:covariance}
\end{figure}

\begin{figure}[!h]
	\centering
	\subfloat[][$\bS_\mathrm{a}$ (i.i.d.~load fluctuations)\\ ${\color{white} """} \widetilde{\a}^*=(0.2906, 0.3152, 0.3942)$]{\includegraphics[scale=0.39]{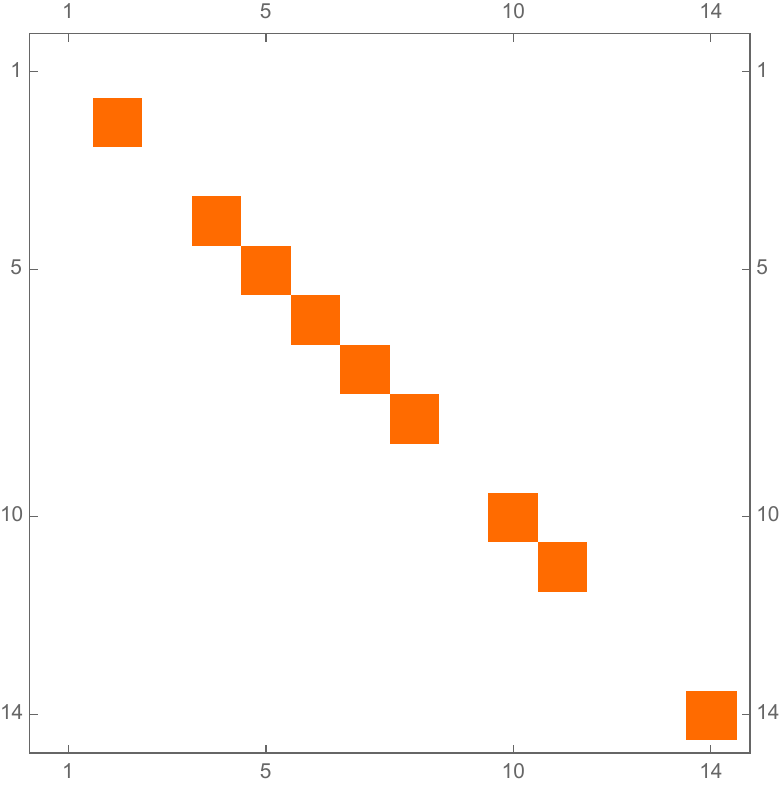}}
	\hspace{0.35cm}
	\subfloat[][$\bS_\mathrm{b}$ (independent but not identically distributed load fluctuations)\\ ${\color{white} """}\widetilde{\a}^*=(0.2470, 0.3607, 0.3923)$]{\includegraphics[scale=0.39]{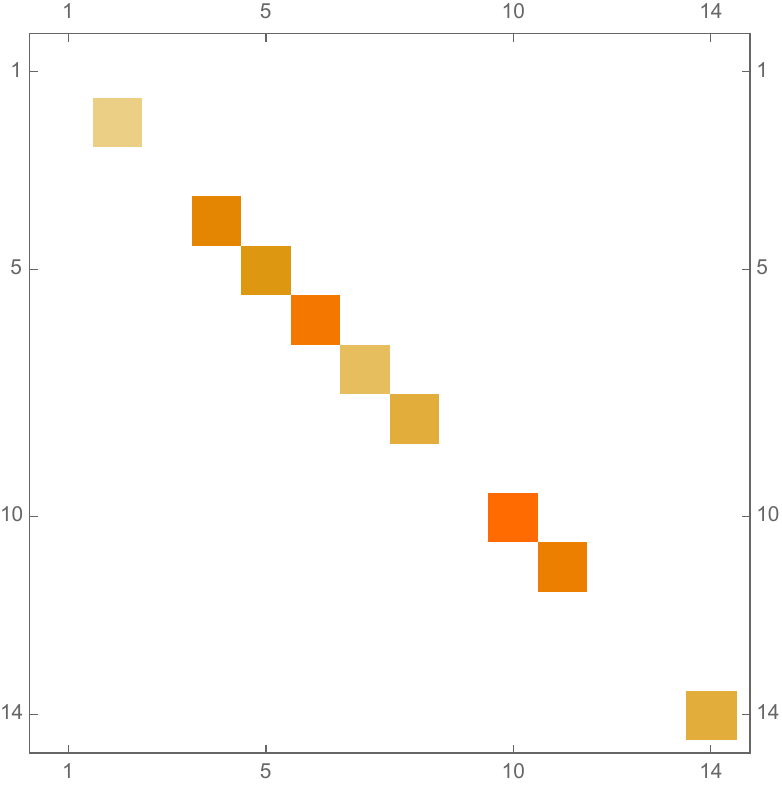}}
	\hspace{0.35cm}
	\subfloat[][$\bS_\mathrm{c}$ (correlated load fluctuations)\\${\color{white} """} \widetilde{\a}^*=(0.2355, 0.3564, 0.4081)$]{\includegraphics[scale=0.39]{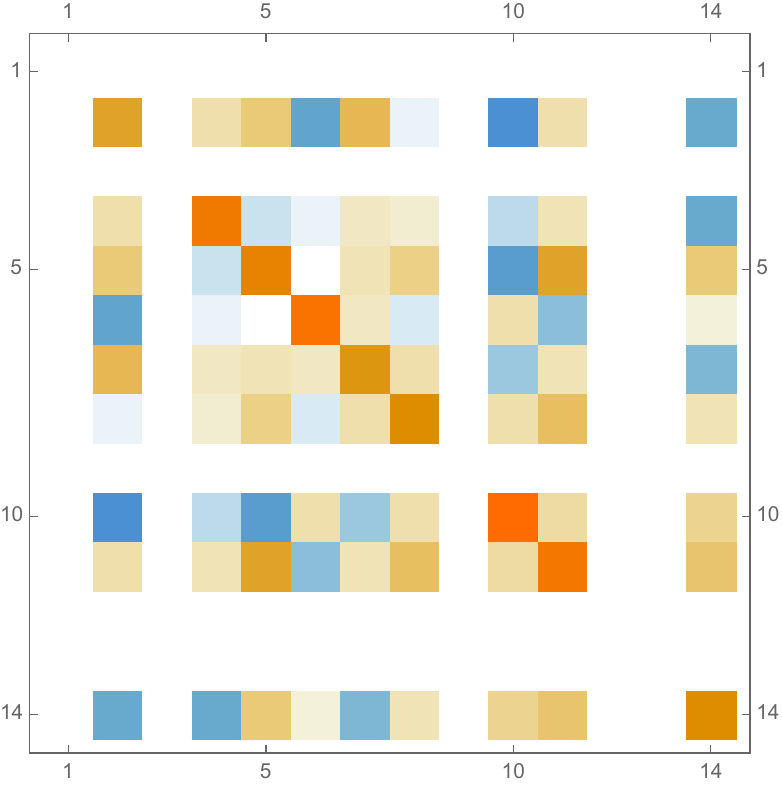}}
\vspace{0.1cm}
	\caption{Scatter plots of the correlation matrices in three different scenarios and resulting optimal load-sharing factors for the three controllable loads in $B=\{2,6,9\}$ for the network in Figure~\ref{fig:ieee14}(b).}
	\label{fig:covariance2}
\end{figure}
\FloatBarrier

\subsection{Relative position of controllable loads and stochastic nodes}
\label{sub:position}
As suggested by Theorem~\ref{thm:optk}, the relative position of the nodes affected by fluctuations and that of the controllable loads play a crucial role in terms of the achievable total power loss, as we will illustrate in the following example on a ring network. Figure~\ref{fig:changingB} visualizes the optimal load-sharing factors in the scenario where there are $|B|=6$ controllable nodes and $|S|=4$ nodes affected by fluctuations. The subset of nodes affected by fluctuation is fixed, $S=\{1,4,7,10\}$, as well as the covariance matrix, but the location of the controllable loads, i.e.,~the subset $B$, changes.
\begin{figure}[!h]
	\centering
	\subfloat[][$S \subset B=\{1,4,5,7,10,11\}$ \\  ${\color{white} """"""} \E \cH_s(\a^*)=2.8825$\\ $ $]{\includegraphics[scale=0.15]{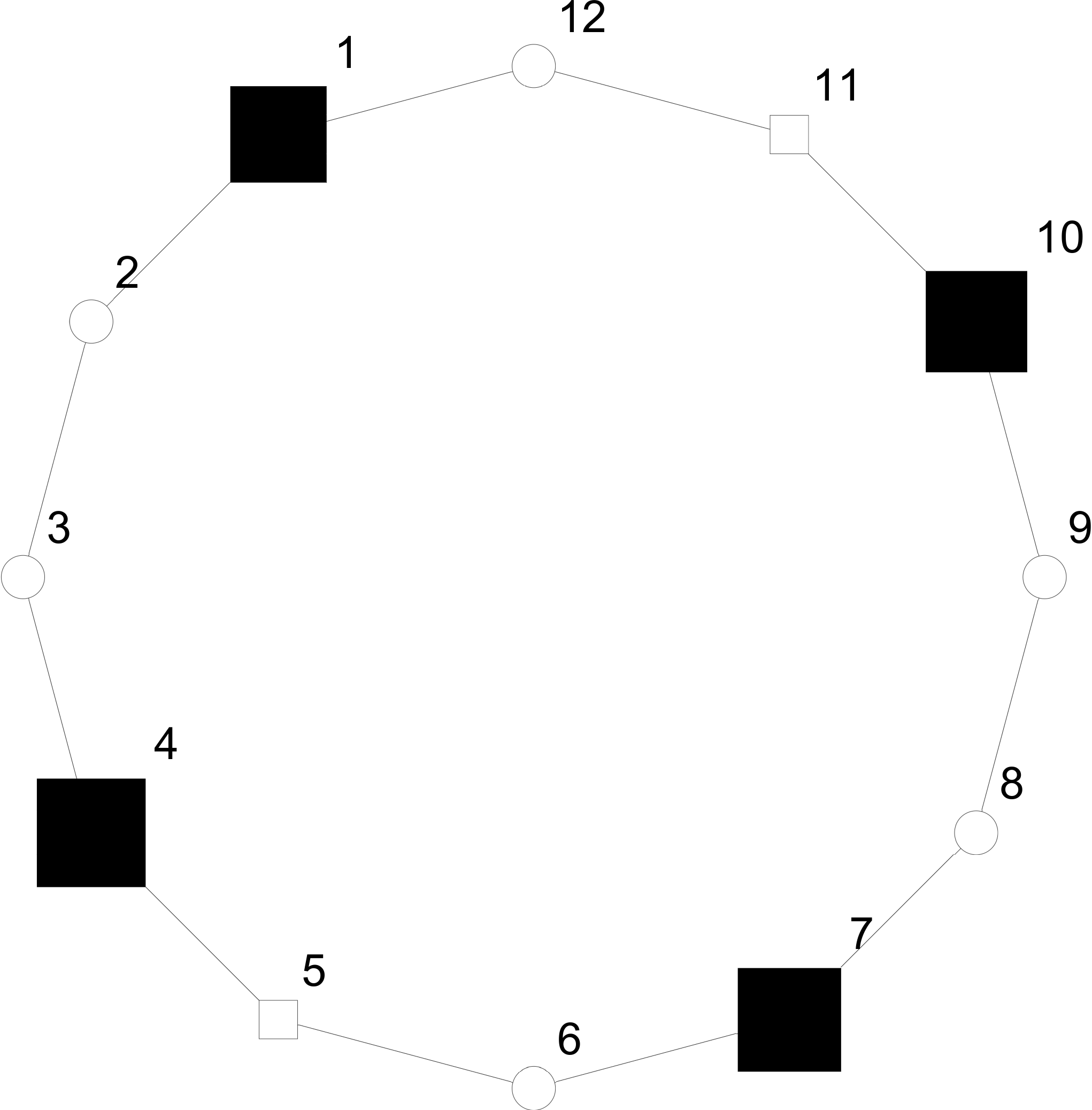}}
	\hspace{0.35cm}
	\subfloat[][$S \not\subset B=\{1,3,5,7,9,11\}$\\ ${\color{white} """"""} \E \cH_s(\a^*)=3.3552$]{\includegraphics[scale=0.15]{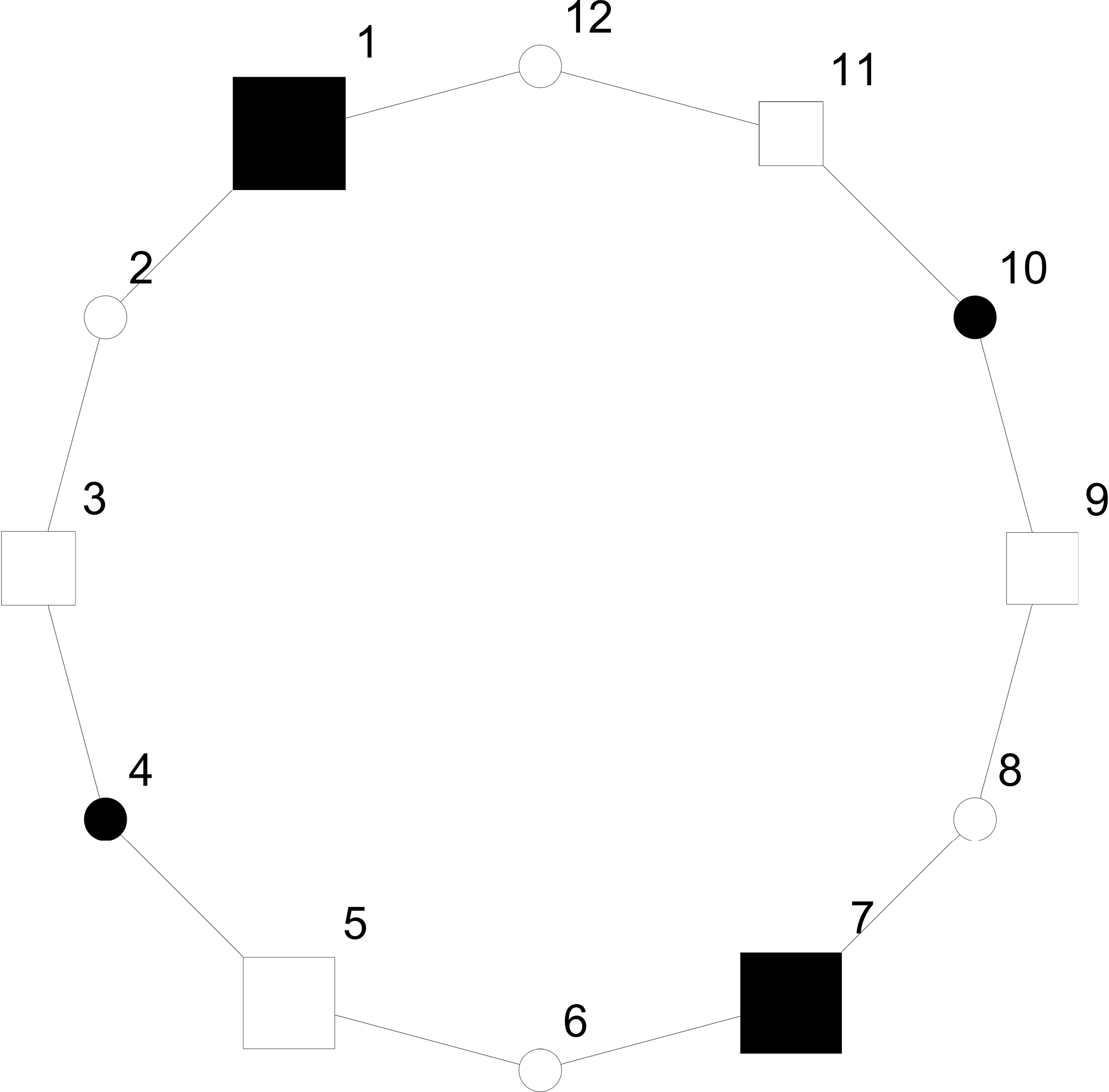}}
	\hspace{0.35cm}
	\subfloat[][$S \subset B^c, \, B=\{2,5,6,8,9,12\}$ \\ ${\color{white} """""""}  \E \cH_s(\a^*)=3.8134$]{\includegraphics[scale=0.15]{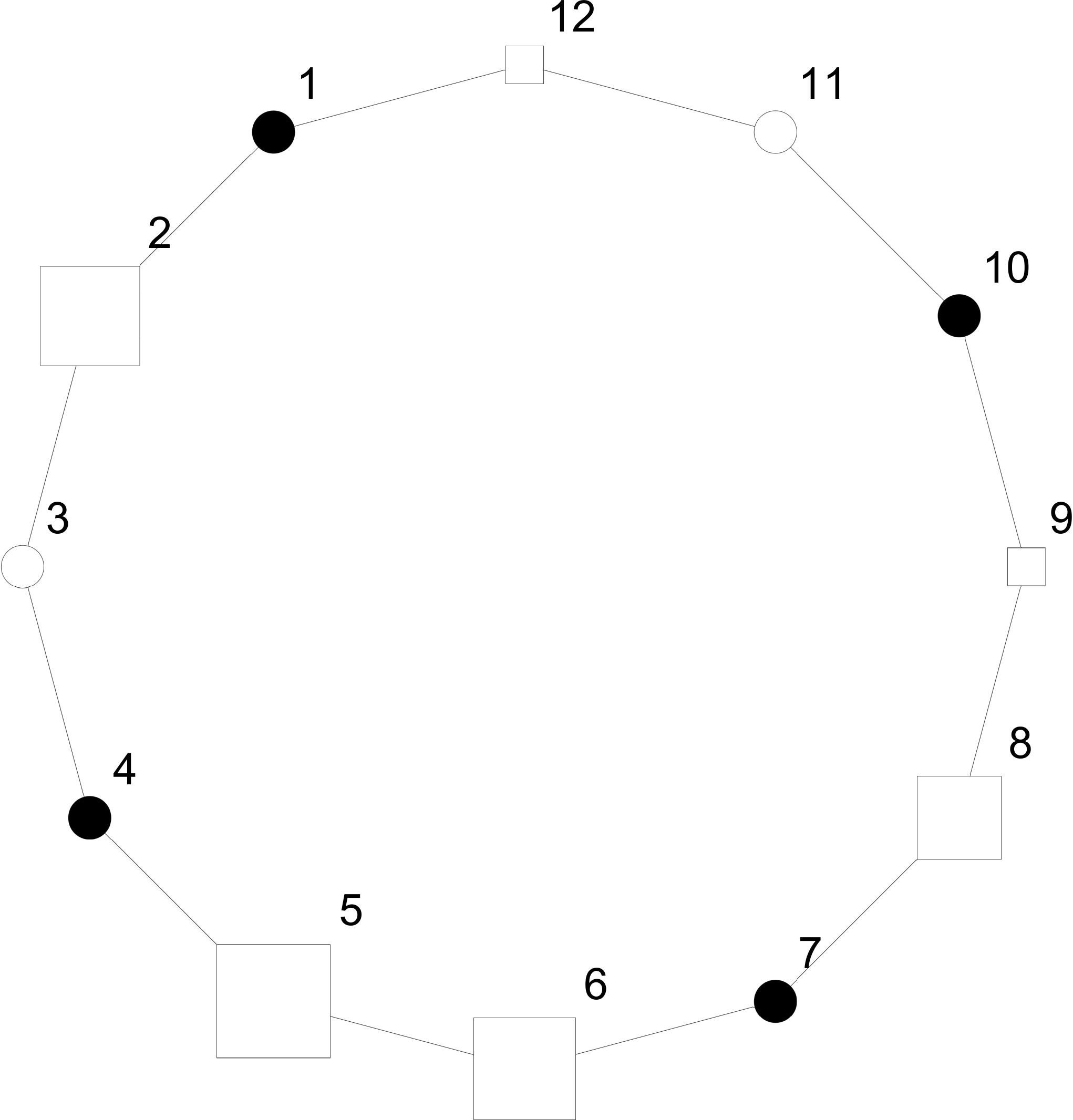}}
	\caption{Different locations of controllable loads $B$ (depicted as squares with area proportional to the optimal load-sharing factors $\a^*$) and corresponding expected total loss.}
	\label{fig:changingB}
\end{figure}
Figure~\ref{fig:changingB}(a) presents the scenario in which $S \subset B$ (i.e., all the stochastic nodes have controllable loads) and the optimal control $\a^*$ for all other nodes in $B \setminus S = \{5,11\}$ is equal to zero, as prescribed by Theorem~\ref{thm:optk}. In the other two cases, in Figures~\ref{fig:changingB}(b) and (c), we picked two different subsets $B$ of controllable loads such that $S \not\subset B$ and $S \subset B^c$, respectively. The corresponding values of the expected loss are higher in these cases than in case (a) and suggest that it may be optimal to place the controllable loads in the nodes affected by stochastic fluctuations. Lastly, note in Figure~\ref{fig:changingB}(b) that the load-sharing factors for node $1$ and $7$ are much larger than the other nodes in $B$, since these two nodes are affected by stochastic fluctuations but the remaining four ones are not. 
\FloatBarrier

\subsection{Negative load-sharing factors}
\label{sub:negative}
The fact that a load-sharing factor is non-negative means that the corresponding node absorbs part of power excess (if $\sum_{i=1}^n \o_i>0$) and balance out shortages (if $\sum_{i=1}^n \o_i<0$). In most of the related work in primary response mechanisms and automatic generation controls for power grids, the participation factors (that our load-sharing factors generalize) are in fact taken to be non-negative, i.e.,~$\a_v \geq 0$ for all $v \in B$. This assumption tacitly implies that all the controllable generators and storage have ``coordinated'' actions, i.e., they either all increase or all decrease their power output.

In our formulation of the optimization problems~\eqref{eq:optk} and~\eqref{eq:opt} we do not make such an assumption and load-sharing factors can also be negative, as long as the condition~\eqref{eq:asum} is met. This is crucial as for certain covariance structures of the load fluctuations (especially when there are strong negative correlations) it is optimal to have negative load-sharing factors in some nodes: we illustrate this fact for a small network illustrated in Figure~\ref{fig:smallnetwork}.
\begin{figure}[!h]
\centering
\includegraphics[scale=0.19]{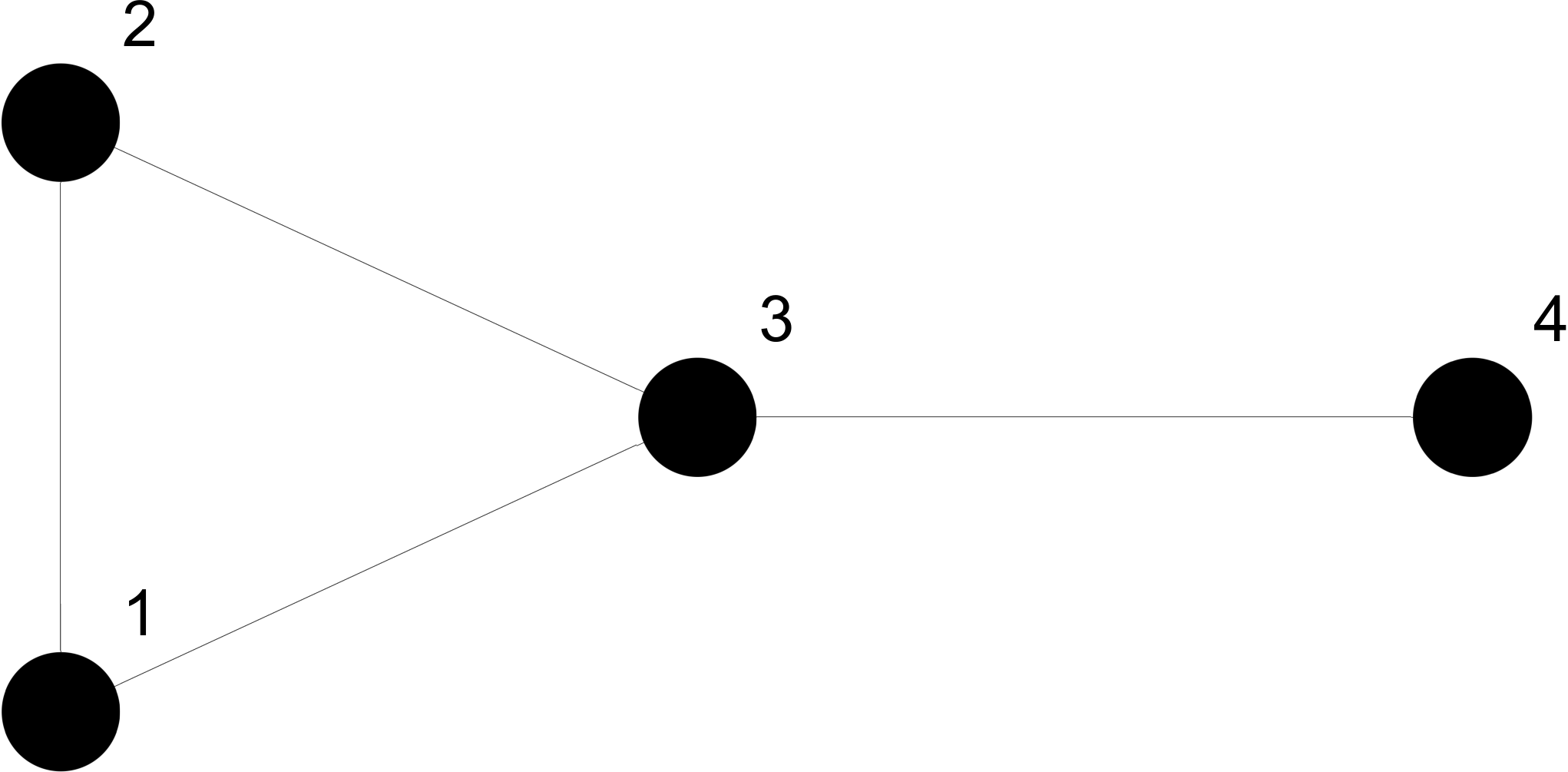}
\vspace{0.2cm}
\caption{A small network modeled by a graph with $n=4$ nodes and $m=4$ edges with unit weights}
\label{fig:smallnetwork}
\end{figure}
\FloatBarrier
Table~\ref{tab:negative} below lists the optimal load-sharing factors $\a^*$ corresponding to different set of controllable loads in the network in Figure~\ref{fig:smallnetwork} where the load fluctuations covariance structure is assumed to be
\[
\Sigma=
\left(
\begin{array}{cccc}
 1 & 0 & 0 & -0.5 \\
 0 & 1 & 0 & -0.5 \\
 0 & 0 & 1 & -0.5 \\
 -0.5 & -0.5 & -0.5 & 1 \\
\end{array}
\right).
\]
The best way for the controllable loads to respond to the negative correlations that the load fluctuations have in this network is having the controllable load in node $4$ taking actions ``mirroring'' those of the other three nodes, in the sense that $\a^*_4 <0$ while the load-sharing factors of the other nodes in $B$ are always positive.
\begin{table}[!h]
\centering
\vspace{-0.25cm}
\begin{tabular}{l|l}
 $\quad \, B$ & $\quad \qquad \a^*(B)$\\
 \hline
$\{1,4\}$& $ (6/5, 0, 0, -1/5)$\\
$\{1,2,4\}$& $ (2/3, 2/3, 0, -1/3)$\\
$\{1,2,3,4\}$& $ (1/2,1/2, 1/2,-1/2)$
\end{tabular}
\vspace{0.2cm}
\caption{Sets of controllable loads for the network in Figure~\ref{fig:smallnetwork} and corresponding optimal load-sharing factors}%
\label{tab:negative}%
\end{table}
\FloatBarrier
\subsection{Non-monotonicity of expected power loss when adding controllable loads}
\label{sub:nonmonotone}
An extra controllable load always reduces the expected total power loss if the corresponding optimal load-sharing vector $\a^*$ is selected, since it corresponds to removing one constraint in the optimization problem~\eqref{eq:optk}. However, if the chosen load-sharing factors of the augmented subset of controllable loads are \textit{not} the optimal ones, adding an extra controllable load does not necessarily reduce the expected total power loss. We illustrate this fact with an example in which the control is always assumed to be equal-share between the controllable loads in $B$, i.e., $\widetilde{\a} = \frac{1}{|B|} \bmo$. Consider the network given in Figure~\ref{fig:notmonotone} and assume that the stochastic loads are i.i.d.~with unit variance.
\begin{figure}[!h]
\centering
\includegraphics[scale=0.22]{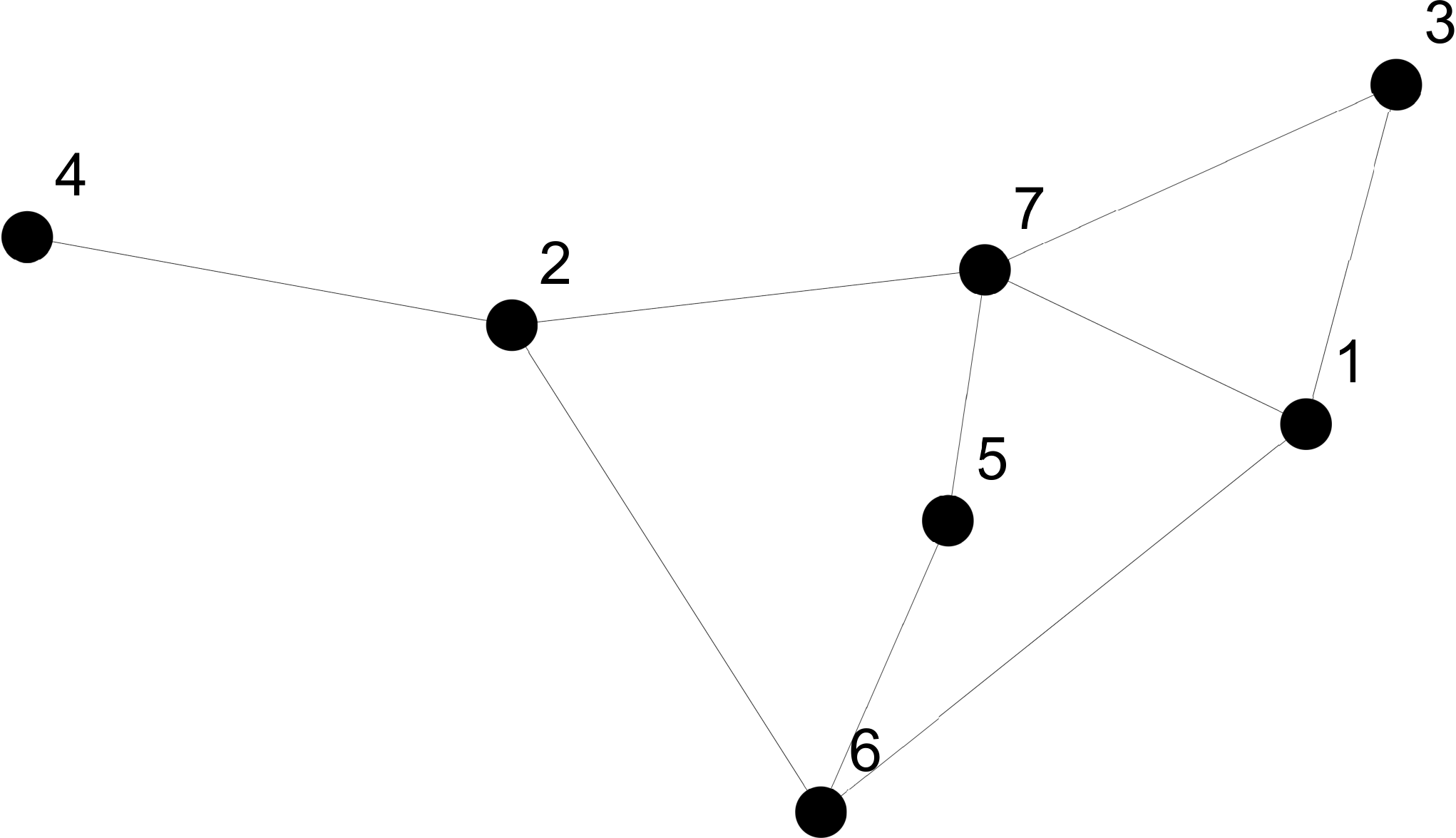}
\vspace{0.2cm}
\caption{A small network modeled by a graph with $n=7$ nodes and $m=9$ edges with unit weights}
\label{fig:notmonotone}
\end{figure}
\FloatBarrier
Table~\ref{tab:notmonotone} below lists the expected total power losses for some subsets $B$ of controllable loads and compares them with those for some augmented subset $B \cup \{4\}$. It is evident that in every one of these case, adding an additional controllable load in node $4$ without optimally readjusting the load-sharing factors result in a higher expected total loss.
\begin{table}[!h]
\centering
\vspace{-0.25cm}
\begin{tabular}{l|c|c}
 $\quad \, B$ & $\E \cH(B)$& $\E \cH(B\cup \{4\})$\\
 \hline
$\{2\}$&  $2.6875$ & $3.0625$ \\
$\{2,5\}$&  $2.0625$ & $2.1319$\\
$\{2,6,7\}$& $1.7986$ & $1.8438$
\end{tabular}
\vspace{0.2cm}
\caption{Expected total power losses for the network in Figure~\ref{fig:notmonotone} assuming equal load-sharing factors for some subsets $B$ of controllable loads and then for the subsets augmented with an extra node, namely $B \cup \{4\}$.}%
\label{tab:notmonotone}%
\end{table}%
\FloatBarrier%

\subsection{Empirical evidence of the scaling law}
\label{sub:scaling}
%
The scaling law derived in Section~\ref{sec4}, despite having been obtained averaging over the possible location of the controllable loads, still give precious insight about how the average total loss decrease with the number of controllable loads. Aiming to corroborate this fact, we consider the IEEE RTS 96-bus test network~\cite{Zimmerman2011} and track the average total loss while adding one by one controllable loads in random locations and assuming equal share among them.

\begin{figure}[!h]
	\centering
	\subfloat[][i.i.d.~load fluctuations]{\includegraphics[scale=0.4]{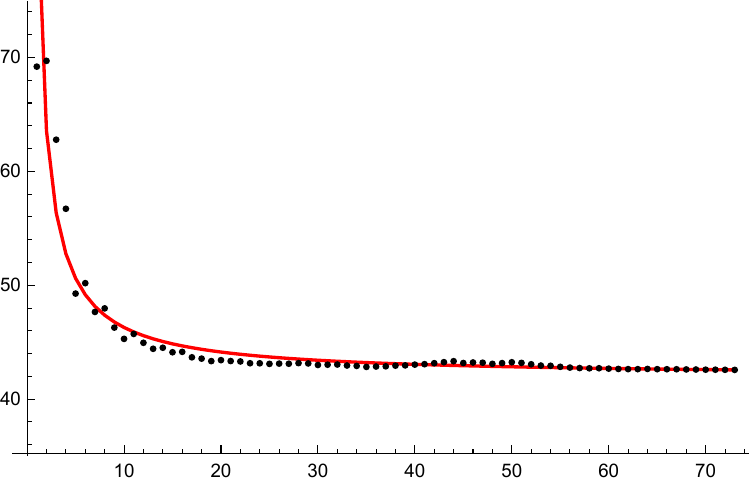}}
	\hspace{0.35cm}
	\subfloat[][Independent but not identically distributed load fluctuations]{\includegraphics[scale=0.4]{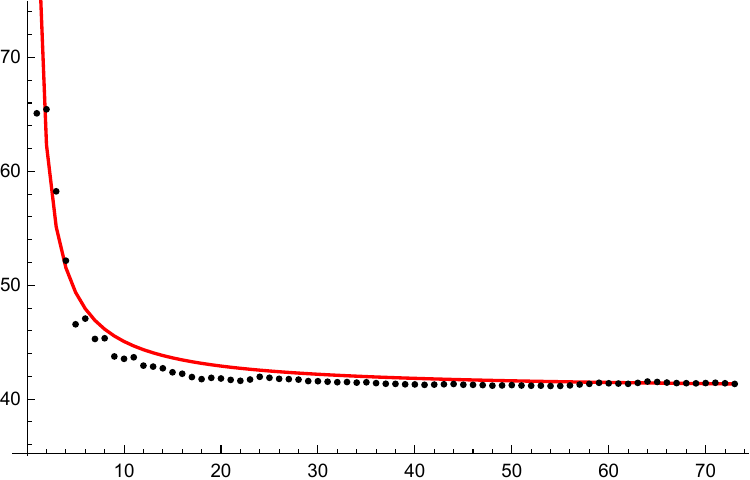}}
	\hspace{0.35cm}
	\subfloat[][Correlated load fluctuations]{\includegraphics[scale=0.4]{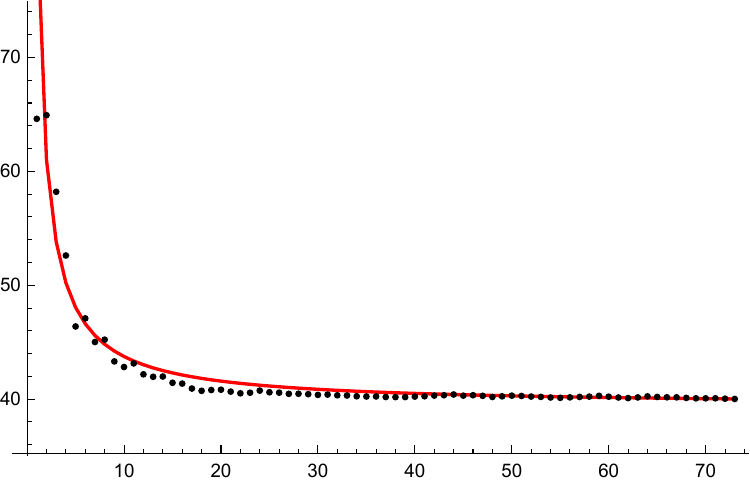}}
	\caption{Theoretical scaling (in red) of the expected total loss as predicted by Theorem~\ref{thm:aveHk} vs.~empirical total loss (in black) while adding controllable loads one by one in random locations.}
	\label{fig:96}
\end{figure}
\FloatBarrier
As illustrated by Figure~\ref{fig:96}, the theoretical scaling for the expected total loss scales with the number of controllable loads stated in Theorem~\ref{thm:aveHk} while averaging on all possible locations is in fact very accurate also for a single instance where new controllable locations are randomly added.


\section{Concluding remarks}
\label{sec6}
In this paper we consider a stochastic lossy transport network in which some nodes have controllable loads and derive a closed-form expression for the optimal control when aiming to minimize the average total loss. The model is inspired by power systems where distributed energy resources can be used as virtual storage to mitigate the fluctuations in the power generated by renewable energy sources and in power demand. Our analysis unveils the complex interplay between the network structure, the location of the controllable loads and the covariance structure of the power fluctuations and gives insight in how much the average total loss can be reduced by adding a given number of controllable loads to the network.

We derived explicit optimal load-sharing factors for controllable loads in various scenarios. Our analysis, even if it uses a stylized mathematical model, suggests that the optimal displacement and operations of distributed energy resources must account for the possible correlations of the power fluctuations. For this reason it complements the recent efforts in the electrical engineering community in upgrading the existing models for power grids to account both for the intrinsic volatility of renewable energy generation and storage capabilities, see e.g.~\cite{Kanoria2011,LinBitar2016a,LinBitar2016b,LinBitar2016c,LorcaSun2015,Lorca2017}.

Lastly, we presented a more general optimization problem that can be instrumental to explore numerically the trade-off between the best operations for the network and the corresponding cost or penalties for the excessive usage of the controllable nodes. This is particularly relevant for the design of primary response mechanisms and automatic generation controls for power grids~\cite{Apostolopoulou2014,Chertkov2017d,Guggilam2017b,Guggilam2017c,LubinDvorkinBackhaus2016,RoaldChertkov2016,Bienstock2016}. 

\subsection*{Acknowledgments}
This research is supported by NWO VICI grant 639.033.413 and NWO Rubicon grant 680.50.1529.

\appendix
\appendixpage

\section{Properties of the total power loss}
\label{app:a}
\begin{proof}{Proof of Proposition~\ref{prop:Eha}.}
Using the expression for $\Sa$ given in~\eqref{eq:Sap}, we can rewrite $\cH(\a)$ as
\[
	\cH(\a) = \frac{1}{2} (\bmu + \Sa \o)^T L^+ (\bmu + \Sa \o)  =  \frac{1}{2} \bmu^T L^+\bmu + \frac{1}{2} \o^T \Sa^T L^+ \Sa \o + \bmu^T L^+ \Sa \o.
\]
Note that one of the terms on the RHS, namely $\frac{1}{2} \bmu^T L^+\bmu$, is not random and does not depend on the control $\a$. Define the random variable
\begin{equation}
\label{eq:Hs}
	\cH_s(\a) :=  \cH(\a) - \frac{1}{2} \bmu^T L^+\bmu = \frac{1}{2} \o^T \Sa^T L^+ \Sa \o + \bmu^T L^+ \Sa \o,
\end{equation}
which describes precisely the contribution of the stochastic fluctuations to the transportation losses. From the fact that $L^+$ is a positive semi-definite matrix it follows that
\begin{equation}
\label{eq:Hspositive}
	\frac{1}{2} \o^T \Sa^T L^+ \Sa \o = \frac{1}{2} (  \Sa \o)^T L^+ (\Sa \o) \geq 0 \quad \forall \, \o \in \R^n.
\end{equation}
Combining~\eqref{eq:Hs}, \eqref{eq:Hspositive}, and the fact that $ \E (\bmu^T L^+ \Sa \o )=   \bmu L^+ \Sa \E \o = \bm{0}$ yields that
\[
	\E \cH_s(\a) = \E \left (\frac{1}{2} \o^T \Sa^T L^+ \Sa \o \right ) \geq 0.
\]
Applying a classic result for quadratic forms of random vector, see e.g.~\cite[Corollary 3.2b.1]{MP92}, we derive
\begin{equation}\label{eq:Ehsp}
	\E \left (\frac{1}{2} \o^T \Sa^T L^+ \Sa \o \right ) = \frac{1}{2} \, \tr (\Sa^T L^+ \Sa \bS).
\end{equation}
Since $\E \o = \bm{0}$, it follows that $ \E (\bmu^T L^+ \Sa \o )=   \bmu^T L^+ \Sa \E \o = \bm{0}$ and thus identity~\eqref{eq:Hs} can be rewritten as
\[
	 \E \cH_s(\a) = \frac{1}{2} \, \tr (\Sa^T L^+ \Sa \bS).
\]

We now derive identity~\eqref{eq:EHs}. Recall the following well-known properties of the trace of matrix:
\begin{itemize}
	\item[\textup{(i)}] The trace is invariant under cyclic permutations, i.e.,~for any $r \in \N$
	\[
		\tr(A_1 \dots A_r) = \tr(A_2\dots A_r A_1) = \dots = \tr(A_r A_1 \dots A_{r-1}).
	\]
	\item[\textup{(ii)}] The trace of a matrix and of its transpose coincide, i.e.,~$\tr(A)=\tr(A^T)$;
	\item[\textup{(iii)}] The trace of the outer product of two vectors is their inner product, namely
	\[
		\tr(\bm{v}\bm{w}^T) = \tr(\bm{v} \otimes \bm{w}) = \bm{v}^T \bm{w}.
	\]
\end{itemize}
First note that we can rewrite
\begin{align}
	\Sa^T L^+ \Sa
	&= (I - \a \, \bm{1}^T)^T L^+ (I- \a \, \bm{1}^T) = (I - \bm{1} \, \a^T ) ( L^+ - L^+ \a \, \bm{1}^T) \nonumber \\
	&= L^+ - L^+ \a \, \bm{1}^T - \bm{1} \, \a^T L^+  + \bm{1} \, \a^T L^+ \a \, \bm{1}^T. \label{eq:sls}
\end{align}
The aforementioned properties of the trace yield
\begin{equation}
\label{eq:traceeq1}
	 \tr(\bS L^+ \a \, \bm{1}^T) \stackrel{\textup{(ii)}}{=} \tr((\bS L^+ \a \, \bm{1}^T)^T) = \tr(\bm{1} \a^T L^+ \bS ) \stackrel{\textup{(i)}}{=}  \tr(\bS \bm{1} \a^T L^+),
\end{equation}
and
\begin{equation}
\label{eq:traceeq2}
	\tr(\bS \bm{1} \a^T L^+) = \tr ( \bS \bm{1} (L^+ \a)^T) = \tr ((\bS \bm{1}) \otimes (L^+ \a)) \stackrel{\textup{(iii)}}{=} (\bS \bm{1})^T (L^+ \a) \stackrel{\textup{(iii)}}{=}  \bm{1}^T \bS L^+ \a.
\end{equation}
By combining all these equalities and exploiting the linearity of the trace operator, we obtain
\begin{align*}
	\tr(\bS  \Sa^T L^+ \Sa)
	& \stackrel{\eqref{eq:sls}}{=} \tr(\bS L^+) - \tr(\bS L^+ \a \, \bm{1}^T) - \tr(\bS \bm{1} \, \a^T L^+)  + \tr(\bS \bm{1} \, \a^T L^+ \a \, \bm{1}^T)\\
	& \stackrel{\eqref{eq:traceeq1}}{=} \tr(\bS L^+) - 2 \cdot \tr(\bS \bm{1} \, \a^T L^+)  + (\a^T L^+ \a) \cdot \tr(\bS \bm{1} \,  \bm{1}^T)\\
	& \stackrel{\eqref{eq:traceeq2}}{=} \tr(\bS L^+) - 2 \cdot (\bm{1}^T \bS L^+ \a )  + \sigma^2 (\a^T L^+ \a),
\end{align*}
where we also used the fact that $\a^T L^+ \a$ is a scalar in the second step and identity~\eqref{eq:trSJ} in the third step.
\end{proof}

\begin{proof}{Proof of Proposition~\ref{prop:monotone}}
Assume that $e=(i,j) \in (V\times V)$ is the edge with weight $\beta>0$ that has been added to $G$ or whose edge weight has been increased by $\beta>0$ and let $\bm{m}_e =( \bm{e}_i - \bm{e}_j) \in \R^n$ be the corresponding non-weighted incidence vector. In both cases the Laplacian matrix of the newly obtained graph $G'$ can be written as
\[
	L_{G'} = L_G + \beta \, \bm{m}_e \bm{m}_e^T
\]
and, using the generalized version of the Sherman-Morrison formula in~\cite{Meyer73}, we get
\[
	L_{G'}^+ = L_G^+ - \frac{1}{\beta^{-1} + \bm{m}_e^T L_G^+ \bm{m}_e}  L_G^+ \bm{m}_e \bm{m}_e^T L_G^+.
\]
We can thus rewrite the total loss corresponding to any net load profile $\pp(\a)$ as
\begin{align*}
	\cH^{G'}(\a) &=  \frac{1}{2} \pp(\a)^T L_{G'} \pp(\a) \\
	&=  \frac{1}{2} \pp(\a)^T L_G^+ \pp(\a) - \frac{1}{2(\beta^{-1} + \bm{m}_e^T L_G^+ \bm{m}_e)} \pp(\a)^T L_G^+ \bm{m}_e \bm{m}_e^T L_G^+ \pp(\a) \\
	&= \cH^G(\a) - \frac{(m_e^T L_G^+ \pp(\a))^2}{2(\beta^{-1}+\bm{m}_e^T L_G^+ \bm{m}_e)},
\end{align*}
and conclude by noticing that $\bm{m}_e^T L_G^+ \bm{m}_e \geq 0$ and $(\bm{m}_e^T L_G^+ \pp(\a))^2 \geq 0$.
\end{proof}

\section{Proof of Theorem~\ref{thm:optk}}
In this proof we use the so-called \textit{block matrix inversion formula}, which is stated in the next lemma.
\begin{lemma}[Block matrix inversion formula]\label{lem:block}
Consider a matrix with the block structure $
	\begin{pmatrix}
	\displaystyle A & B\\
	\displaystyle C & D
	\end{pmatrix}
$. If both $A$ and $D-C A^{-1} B$ are non-singular matrices, then
\[
	\begin{pmatrix}
	\displaystyle A & B\\
	\displaystyle C & D
	\end{pmatrix}^{-1}
	=
	\begin{pmatrix}
	\displaystyle A^{-1} - A^{-1} B (D-C A^{-1} B)^{-1} C A^{-1} & -A^{-1} B (D-C A^{-1} B)^{-1}\\
	\displaystyle -(D-C A^{-1} B)^{-1} C A^{-1} & (D-C A^{-1} B)^{-1}
	\end{pmatrix}.
\]
\end{lemma}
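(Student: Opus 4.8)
The plan is to prove the formula by exhibiting a block \emph{LDU} (Schur-complement) factorization of $M := \begin{pmatrix} A & B \\ C & D \end{pmatrix}$ into three factors that are each trivial to invert, and then to recover $M^{-1}$ by inverting every factor and multiplying them in reverse order. An equivalent and even shorter route is to take the matrix displayed on the right-hand side as a \emph{candidate} inverse and verify by block multiplication that its product with $M$ (on either side) equals the $2\times 2$ block identity. I would use the factorization as the conceptual derivation and keep the direct multiplication as an independent sanity check.

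First I would introduce the \emph{Schur complement} $T := D - C A^{-1} B$, which is well-defined since $A$ is non-singular by hypothesis and is itself non-singular by the second hypothesis. The key algebraic fact is the factorization
\[
	M = \begin{pmatrix} I & 0 \\ C A^{-1} & I \end{pmatrix}
		\begin{pmatrix} A & 0 \\ 0 & T \end{pmatrix}
		\begin{pmatrix} I & A^{-1} B \\ 0 & I \end{pmatrix},
\]
which one checks by carrying out the two block products on the right and using $A A^{-1} = I$ together with $C A^{-1} B + T = D$. This is exactly where the two hypotheses enter: the outer unitriangular factors are invertible unconditionally (their inverses are obtained by negating the off-diagonal block), while the middle block-diagonal factor is invertible \emph{precisely} because both $A$ and $T$ are non-singular.

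Next I would invert the factorization. Since $(XYZ)^{-1} = Z^{-1} Y^{-1} X^{-1}$, I obtain
\[
	M^{-1} = \begin{pmatrix} I & -A^{-1} B \\ 0 & I \end{pmatrix}
		\begin{pmatrix} A^{-1} & 0 \\ 0 & T^{-1} \end{pmatrix}
		\begin{pmatrix} I & 0 \\ -C A^{-1} & I \end{pmatrix},
\]
and I then collapse the right-hand side by two successive block multiplications to read off the four blocks of $M^{-1}$ in terms of $A^{-1}$, $B$, $C$, and $T^{-1} = (D - C A^{-1} B)^{-1}$. The top-right, bottom-left, and bottom-right blocks fall out immediately as $-A^{-1} B T^{-1}$, $-T^{-1} C A^{-1}$, and $T^{-1}$, matching the stated formula.

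The main obstacle — really the only delicate point — is the top-left block, the one carrying the Schur-complement correction $A^{-1} B T^{-1} C A^{-1}$. Because matrix multiplication does not commute, the order of the factors $A^{-1}, B, T^{-1}, C, A^{-1}$ must be preserved verbatim through both products, and the sign of this correction term must be tracked carefully: assembling the factored form above yields a top-left block $A^{-1} + A^{-1} B T^{-1} C A^{-1}$, i.e.\ the correction enters with a \emph{plus} sign (the standard Sherman--Morrison--Woodbury form), which is the entry most prone to error. Once this block is assembled, I would confirm correctness independently by multiplying $M$ by the resulting matrix block-by-block and checking that the four blocks of the product reduce to $I$, $0$, $0$, $I$ using only $A A^{-1} = I$, $T T^{-1} = I$, and the definition of $T$; this verification closes the proof and simultaneously pins down the sign of the top-left correction term.
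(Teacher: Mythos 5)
Your proof strategy is sound and complete: the LDU factorization through the Schur complement $T = D - CA^{-1}B$ is the standard derivation, both non-singularity hypotheses enter exactly where you say they do, and inverting the three factors in reverse order yields the four blocks as you describe. Note that the paper itself offers no proof of this lemma---it is quoted as a known fact and immediately applied in the proof of Theorem~\ref{thm:optk}---so there is no argument of the paper to compare against; your derivation would serve as the proof.

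There is, however, a substantive point your derivation uncovers: the statement as printed is wrong in its $(1,1)$ block. As your factorization shows (and as you explicitly emphasize), the correct top-left block is $A^{-1} + A^{-1}B\,(D-CA^{-1}B)^{-1}\,CA^{-1}$, with a \emph{plus} sign, whereas the displayed lemma carries a minus sign. A one-line counterexample with scalar blocks: $A=B=C=1$, $D=2$ gives $T=1$ and
\[
	\begin{pmatrix} 1 & 1 \\ 1 & 2 \end{pmatrix}^{-1}
	= \begin{pmatrix} 2 & -1 \\ -1 & 1 \end{pmatrix},
\]
whose $(1,1)$ entry is $2 = A^{-1} + A^{-1}BT^{-1}CA^{-1}$ and not $0 = A^{-1} - A^{-1}BT^{-1}CA^{-1}$. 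So your proof establishes the corrected statement rather than the printed one; do not talk yourself out of the plus sign when you assemble the factors. Reassuringly, the paper's actual use of the lemma is consistent with your version: in the proof of Theorem~\ref{thm:optk} it is applied with $B = -\bm{1}$ and $C = \bm{1}^T$, and the resulting top-left block $\frac{1}{\sigma^2}(L^+_{B})^{-1}\bigl(I - \frac{1}{t_B} \bJ (L^+_{B})^{-1}\bigr)$ is exactly what the plus-sign formula produces (the minus there comes from $B = -\bm{1}$), so the sign in the lemma's statement is a typo with no downstream consequences, and your verification-by-multiplication check is precisely the right safeguard against it.
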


For any vector $\a \in \R^n$ which is such that $\a_{i} =0$ for every node $i \in V \setminus B$, there exists a unique $k$-dimensional vector $\widetilde{\a} \in \R^k$ such that $\a = P_B \widetilde{\a}$. Using this correspondence and the fact that the nominal load profile is balanced, i.e.,~$\bm{1}^T \bmu = 0$, we can rewrite
\[
	\E \cH_s(\a) = \E \cH (P_B \widetilde{\a}) \stackrel{\eqref{eq:EH}}{=}  \frac{\sigma^2}{2} (\widetilde{\a}^T P_B^T L^+ P_B \widetilde{\a}) -  \bm{1}^T \bS L^+ P_B \widetilde{\a}  + \frac{1}{2} \tr(\bS L^+).
\]
Therefore the $n$-dimensional optimization problem~\eqref{eq:optk} rewrites as a $k$-dimensional optimization problem with a single constraint, namely
\begin{align}
\label{eq:optk_rewritten}
	\min_{\widetilde{\a} \in \R^k} \quad & \frac{\sigma^2}{2} (\widetilde{\a}^T P_B^T L^+ P_B \widetilde{\a}) -  \bm{1}^T \bS L^+ P_B \widetilde{\a} \\
	\text{s.t.} \quad &  \bm{1}^T \widetilde{\a} = 1. \nonumber
\end{align}

The matrix $L^+_{B}:= P_B^T L^+ P_B \in \R^{ k \times k}$ is positive definite, as for any vector $v \in R^k$, $v\neq \bm{0}$,
\[
	v^T L^+_{B} v = v^T P_B^T L^+ P_B v =
	\left (v \, \, \, \bm{0} \right )
	L^+
	\begin{pmatrix}
	v \\
	\bm{0}\\
	\end{pmatrix} >0,
\]
where the last inequality follows from the fact that the vector $\left (v \, \, \, \bm{0} \right )$ is not a multiple of the vector $\bm{1}$ and thus does not lie in the null space of $L^+$. The optimization problem in~\eqref{eq:optk_rewritten} has then a unique solution, since the corresponding Hessian is positive definite.

Let $\g \in \R$ be the Lagrange multiplier $\g$ associated with the unique equality constraint of the optimization problem~\eqref{eq:optk_rewritten}. The associated Lagrangian associated is
\[
	\mathcal{L}(\widetilde{\a} ,\g) = \frac{\sigma^2}{2} \widetilde{\a}^T (P_B^T L^+ P_B) \widetilde{\a} - ( P_B^T L^+ \bS \bm{1})^T \widetilde{\a} - \g (\bm{1}^T \widetilde{\a} -1).
\]
Setting $\bm{b}:=P_B^T L^+ \bS \bm{1} \in \R^{k}$, the optimality conditions read
\[
	\begin{cases}
		\sigma^2 L^+_{B} \widetilde{\a} - \g \bm{1} =  \bm{b},\\
		\bm{1}^T \widetilde{\a} =1,
	\end{cases}
\]
or, equivalently, in matrix form
\begin{equation}
	\begin{pmatrix}
	\displaystyle \sigma^2 L^+_{B} & -\bm{1}\\
	\displaystyle \bm{1}^T & 0 \\
	\end{pmatrix}
	\begin{pmatrix}
	\displaystyle \widetilde{\a}\\
	\displaystyle \g  \\
	\end{pmatrix}
	=
	\begin{pmatrix}
	\displaystyle \bm{b}\\
	\displaystyle 1\\
	\end{pmatrix}.
	\label{eq:matrixform}
\end{equation}
Being positive definite, $L^+_{B}$ is invertible and its inverse is also positive definite, which means that $t_B:=\bm{1}^T (L^+_{B})^{-1} \bm{1} >0$. In view of the fact that $\sigma^2 t_B^{-1} \neq 0$ and $L^+_{B}$ is invertible, we can use the block matrix inversion formula given in Lemma~\ref{lem:block} to obtain
\[
	\begin{pmatrix}
	\displaystyle \sigma^2 L^+_{B} & -\bm{1}\\
	\displaystyle \bm{1}^T & 0 \\
	\end{pmatrix}^{-1}
	=
	\begin{pmatrix}
		\displaystyle \frac{1}{\sigma^2} (L^+_{B})^{-1} \Big ( I-\frac{1}{t_B} \bJ (L^+_{B})^{-1} \Big) & \displaystyle \frac{1}{t_B} (L^+_{B})^{-1} \bm{1}\\
		\displaystyle \frac{1}{t_B} \bm{1}^T (L^+_{B})^{-1} & \displaystyle \frac{\sigma^2}{t_B} \\
	\end{pmatrix}.
\]
The solution of the linear system~\eqref{eq:matrixform} then reads
\[
	\begin{pmatrix}
		\displaystyle \widetilde{\a}\\
		\displaystyle \g  \\
	\end{pmatrix}
	=
	\begin{pmatrix}
		\displaystyle \sigma^2 L^+_{B} & -\bm{1}\\
		\displaystyle \bm{1}^T & 0 \\
	\end{pmatrix}^{-1}
	\begin{pmatrix}
		\displaystyle \bm{b}\\
		\displaystyle 1\\
	\end{pmatrix}
	=
	\begin{pmatrix}
		\displaystyle \frac{1}{\sigma^2} (L^+_{B})^{-1} \Big ( I-\frac{1}{t_B} \bJ (L^+_{B})^{-1} \Big) & \displaystyle \frac{1}{t_B} (L^+_{B})^{-1} \bm{1}\\
		\displaystyle \frac{1}{t_B} \bm{1}^T (L^+_{B})^{-1} & \displaystyle \frac{\sigma^2}{t_B} \\
	\end{pmatrix}
	\begin{pmatrix}
		\displaystyle \bm{b}\\
		\displaystyle 1\\
	\end{pmatrix}
\]
and thus the optimal load-sharing vector $\widetilde{\a} \in \R^k$ is given by
\begin{align*}
	\widetilde{\a}^*
	&= \frac{(L^+_{B})^{-1} }{\sigma^2} \left ( I - \frac{1}{t_B} \bJ (L^+_{B})^{-1} \right ) \bm{b} + \frac{1}{t_B} (L^+_{B})^{-1} \bm{1} \\
	& =\frac{1}{t_B} (L^+_{B})^{-1} \bm{1} + (L^+_{B})^{-1}  \left ( I - \frac{1}{t_B} \bJ (L^+_{B})^{-1} \right ) \frac{\bm{b}}{\sigma^2} \\
	&= \frac{1}{t_B} (L^+_{B})^{-1} \bm{1} + \left ( (L^+_{B})^{-1} - \frac{1}{t_B} (L^+_{B})^{-1} \bJ (L^+_{B})^{-1} \right ) P_B^T L^+ \frac{\bS \bm{1}}{\sigma^2}\\
	&= \frac{1}{t_B} (L^+_{B})^{-1} \bm{1} + \left ( I - \frac{1}{t_B} (L^+_{B})^{-1} \bJ \right ) (L^+_{B})^{-1} P_B^T L^+ \frac{\bS \bm{1}}{\sigma^2}.
\end{align*}
We now focus on the special case where $S \subseteq B$ and prove identity~\eqref{eq:speccase}. Rewrite $L^+$ as a block matrix
\[
	\begin{pmatrix}
		\displaystyle L^+_{B} & L^+_{C}\\
		\displaystyle (L^+_{C})^T & L^+_{B^c} \\
	\end{pmatrix},
\]
with $L^+_{B} \in \R^{k \times k}$, $L^+_{C} \in \R^{k \times n-k}$ and $L^+_{B^c} \in \R^{n-k \times n-k}$. Note that $L^+_{B}$ and $L^+_{B^c}$ are symmetric matrices, since $L^+$ is. This is consistent with the former definition of $L^+_B$, since
\[
	P_B^T L^+ P_B = ( I_k | \, \mathbb{O} ) L_+ ( I_k | \, \mathbb{O} )^T = ( I_k | \, \mathbb{O} )^T \begin{pmatrix}
		\displaystyle L^+_{B} & L^+_{C}\\
		\displaystyle (L^+_{C})^T & L^+_{B^c} \\
	\end{pmatrix} ( I_k | \, \mathbb{O} )= L^+_{B}.
\]
We start by noticing that
\[
	(L^+_{B})^{-1} P_B^T L^+ = (L^+_{B})^{-1} ( I_k |\, \mathbb{O} )
	\begin{pmatrix}
		\displaystyle L^+_{B} & L^+_{C}\\
		\displaystyle (L^+_{C})^T & L^+_{B^c} \\
	\end{pmatrix} =
	(L^+_{B})^{-1} ( L^+_{B} |\, L^+_{C} ) = ( I_k |\, (L^+_{B})^{-1} L^+_{C} )  \in \R^{k \times n}.
\]
From the assumption $S \subseteq B$ it follows that the covariance matrix can be rewritten as
\[
	\bS  = \begin{pmatrix}
	\displaystyle \bS_B & \mathbb{O}\\
	\displaystyle \mathbb{O} & \mathbb{O} \\
	\end{pmatrix},
\]
where $\bS_B \in \R^{k \times k}$ is itself a covariance matrix (hence, a symmetric positive semi-definite matrix). Trivially, $ \bmo^T  (  \bS_B |\, \mathbb{O} )\bmo= \tr(\bS_B \bJ) = \tr(\bS \bJ) = \sigma^2$. Furthermore,
\begin{align*}
	(L^+_{B})^{-1} P_B^T L^+ \bS & =  (L^+_{B})^{-1} P_B^T L^+
	\begin{pmatrix}
		\displaystyle \bS_B & \mathbb{O}\\
		\displaystyle \mathbb{O}& \mathbb{O} \\
	\end{pmatrix}
	= ( I_k |\, (L^+_{B})^{-1} L^+_{C} )
	\begin{pmatrix}
		\displaystyle \bS_B & \mathbb{O}\\
		\displaystyle \mathbb{O}& \mathbb{O} \\
	\end{pmatrix}
	= (  \bS_B |\, \mathbb{O} ).
\end{align*}
The optimal control $\widetilde{\a}^*$ then rewrites as
\begin{align*}
	\widetilde{\a}^* &= \frac{(L^+_{B})^{-1}}{t_B} \bmo +  \left (I_k - \frac{(L^+_{B})^{-1}  \bJ}{t_B } \right ) (L^+_{B})^{-1}  P_B^T L^+\frac{\bS \bmo}{ \sigma^2}\\
	&= \frac{(L^+_{B})^{-1}}{t_B} \bmo +  \left (I_k - \frac{(L^+_{B})^{-1}  \bJ}{t_B } \right )  (  \bS_B | \, \mathbb{O} ) \frac{\bmo}{ \sigma^2}\\
	&= \frac{(L^+_{B})^{-1}}{t_B} \bmo +  (  \bS_B | \, \mathbb{O} ) \frac{\bmo}{ \sigma^2} - \frac{(L^+_{B})^{-1}}{t_B }  \bmo  \frac{\bmo^T  (  \bS_B | \, \mathbb{O} ) \bmo}{ \sigma^2}\\
	&= \frac{(L^+_{B})^{-1}}{t_B} \bmo + \frac{\bS \bmo}{ \sigma^2} - \frac{(L^+_{B})^{-1}}{t_B } \bmo\\
	&= \frac{\bS \bmo}{ \sigma^2}. \hfill \qed
\end{align*}

\section{Proof of Theorem~\ref{thm:optn}}
We already rewrote in~\eqref{eq:quadform} the objective function of the optimization problem~\eqref{eq:opt} as a quadratic form in $\a$, 
\[
	\E \cH_s(\a) = \frac{\sigma^2}{2} \a^T A \a -  \bm{b}^T \a  + c,
\]
where $A= L^+$, $\bm{b}= L^+ \bS \bm{1} \in \R^n$, and $c = \tr(\bS L^+)/2 \in \R^+$. Note that, for the purpose of solving the optimization problem~\eqref{eq:opt}, we can ignore the constant term $c$.

Denote by $ 0 = \l_1 < \l_2 < \dots < \l_{n}$ the eigenvalues of the weighted Laplacian matrix $L$ and let $\bm{v}_1,\bm{v}_2,\dots,\bm{v}_n$ be the corresponding orthonormal basis of eigenvectors.  
Consider the representation of the vector $\a \in \R^n$ in this basis, namely
\begin{equation}
\label{eq:representation}
	\a = a_1 \bm{v}_1 + a_2 \bm{v}_2 + \dots + a_n \bm{v}_n,
\end{equation}
with $a_1,\dots,a_n \in \R$. In view of the fact that the rows of $L$ sum up to zero, it immediately follows that $\bm{v}_1=\frac{1}{\sqrt{n}} \bm{1}$. From the constraint~\eqref{eq:asum}, i.e., $\bm{1}^T \a = 1$, it immediately follows that $a_1=1  / \sqrt{n}$. Indeed,
\[
	a_1 \cdot \sqrt{n} = a_1 \cdot \frac{\bm{1}^T \bm{1}}{\sqrt{n}} =  a_1 \bm{1}^T \bm{v}_1 = a_1 \bm{1}^T \bm{v}_1 + a_2 \bm{v}_1^T \bm{v}_2 + \dots + a_n \bm{v}_1^T \bm{v}_n  = \bm{1}^T ( a_1 \bm{v}_1 + a_2 \bm{v}_2 + \dots + a_n \bm{v}_n ) = \bm{1}^T \a = 1.
\]
Defining the real coefficients $\kappa_2, \dots, \kappa_n$ as
\[
	\kappa_i:=\langle \bS \bm{1}, \bm{v}_i \rangle = \bm{1}^T \bS \bm{v}_i, \qquad i=2,\dots,n,
\]
and using the representation~\eqref{eq:representation}, the two terms of the quadratic form above rewrites as
\[
	\a^T L^+ \a = (a_1 \bm{v}_1 + a_2 \bm{v}_2 + \dots + a_n \bm{v}_n)^T L^+ (a_1 \bm{v}_1 + a_2 \bm{v}_2 + \dots + a_n \bm{v}_n) = a_1^2 \bm{v}_1^T L^+ \bm{v}_1 + \sum_{i=2}^n a_i^2 \bm{v}_i^T L^+ \bm{v}_i = \sum_{i=2}^n \frac{a_i^2}{\l_i},
\]
and
\[
	\bm{b}^T \a = \bm{1}^T \bS L^+ \a = \bm{1}^T \bS L^+  (a_1 \bm{v}_1 + a_2 \bm{v}_2 + \dots + a_n \bm{v}_n) = \sum_{i=2}^n \frac{a_i}{\l_i} \bm{1}^T \bS \bm{v}_i =  \sum_{i=2}^n a_i \frac{\kappa_i}{\l_i},
\]
where we used twice the fact that $L^+ \bm{v}_1 = \bm{0}$. The objective function thus can be rewritten as
\begin{equation}
\label{eq:EHabeta}
	\E \cH(\a)
	= \frac{\sigma^2}{2} \a^T A \a -  \bm{b}^T \a
	= \frac{\sigma^2}{2} \sum_{i=2}^n \frac{a_i^2}{\l_i}  - \sum_{i=2}^n a_i \frac{\kappa_i}{\l_i} =: g(a_2,\dots, a_n).
\end{equation}
The optimization problem~\eqref{eq:opt} is therefore equivalent to a unconstrained minimization problem in $n-1$ variables, $a_2,\dots, a_n$, which can be expressed using the newly introduced function $g: \R^{n-1} \to \R$. The gradient of $g$ can be calculated as
$
	\nabla g (a_2,\dots, a_n)=\Big  ( \frac{\sigma^2}{\l_i}  a_i - \frac{\kappa_i}{\l_i}\Big  )_{i=2,\dots,n}
$
and the Hessian is the diagonal matrix $\bm{H}(g) = \sigma^2 \cdot \mathrm{diag}(\l_2^{-1},\dots, \l_n^{-1})$. The Hessian $\bm{H}(g)$ is constant as it does not depend on $a_2,\dots, a_n$ and is a positive definite matrix, since all its diagonal terms are positive, in view of the fact that $\l_i>0$ for $i=2,\dots,n$ and that $\sigma^2 >0$. The problem is then strictly convex and any stationary point satisfying $\nabla g (a_2,\dots, a_n) = \bm{0}$ would then be a minimum for the function $g$. Solving the optimality condition $\nabla g (a_2,\dots, a_n)= \bm{0}$ yields
\[
	a^*_i =\frac{\kappa_i }{\sigma^2}, \qquad i=2,\dots,n.
\]
Consequently, the optimal load-sharing factor vector $\a^*$ is unique and is given by
\[
	\a^* = \frac{1}{\sqrt{n}} \bm{v}_1 + \frac{1}{\sigma^2} \sum_{i=2}^n \kappa_i \bm{v}_i = \frac{1}{n} \bm{1} + \frac{1}{\sigma^2} \sum_{i=2}^n \kappa_i \bm{v}_i.
\]
Lastly, setting $\bm{v}_1 :=\frac{1}{\sqrt{n}} \bm{1}$ and $\kappa_1 := \langle \bS \bm{1}, \bm{v}_1 \rangle = \bm{1}^T \bS \bm{v}_1 =\bm{1}^T \bS \frac{1}{\sqrt{n}} \bm{1}$, the vector $\a^*$ rewrites as
\[
	\a^* = \frac{\kappa_1}{\sigma^2} \frac{1}{\sqrt{n}} \bm{1} + \frac{1}{\sigma^2} \sum_{i=2}^n \kappa_i \bm{v}_i = \frac{1}{\sigma^2} \left (\sum_{i=1}^n \kappa_i \bm{v}_i \right )= \frac{1}{\sigma^2} \left (\sum_{i=1}^n \langle \bS \bm{1}, \bm{v}_i \rangle \, \bm{v}_i \right ) =  \frac{\bS \bm{1}}{\sigma^2}. \hfill \qed
\]

\section{Proof of Theorem~\ref{thm:aveHk}}

The starting point of the proof are two identities that leverage the properties of the pseudoinverse $L^+$ of the graph Laplacian. Firstly,
\begin{align}
	\sum_{B \subseteq V \, : \, |B|=k} L^+ \Big (\sum_{i \in B} \bm{e}_i\Big ) &= L^+ \Big (\sum_{B \subseteq V \, : \, |B|=k}  \sum_{i \in B} \bm{e}_i\Big ) = L^+ \Big (\sum_{i=1}^n \sum_{B \subseteq V \, : \, |B|=k}   \bm{e}_i \mathds{1}_{\{i \in B\}} \Big ) \nonumber \\
	& = L^+ \Big (\sum_{i=1}^n \bm{e}_i \sum_{B \subseteq V \, : \, |B|=k}   \mathds{1}_{\{i \in B\}} \Big ) = L^+ \Big (\sum_{i=1}^n \bm{e}_i  \binom{n-1}{k-1} \Big )\nonumber  \\
	&= \binom{n-1}{k-1} L^+ \Big (\sum_{i=1}^n \bm{e}_i   \Big ) = \binom{n-1}{k-1} L^+ \bm{1} = \bm{0}, \label{eq:linearid}
\end{align}
where we use the fact that in a graph with $n$ nodes, each nodes belong to exactly $\binom{n-1}{k-1}$ subsets of $k$ nodes. We further claim that
\begin{equation}
\label{eq:quadid}
	\sum_{B \subseteq V \, : \, |B|=k} \Big (\sum_{i \in B} \bm{e}_i\Big )^T L^+ \Big (\sum_{i \in B} \bm{e}_i\Big ) = \binom{n-2}{k-1} \tr (L^+),
\end{equation}
with the convention that $\binom{n-2}{n-1}=0$. Since
\[
	\Big (\sum_{i \in B} \bm{e}_i\Big )^T L^+ \Big (\sum_{i \in B} \bm{e}_i\Big ) = \sum_{i \in B} \bm{e}_i^T L^+ \bm{e}_i + \sum_{i,j \in B, \, i \neq j} \bm{e}_i^T L^+ \bm{e}_j,
\]
we can rewrite the LHS of~\eqref{eq:quadid} as
\begin{equation}
\label{eq:intermediate}
	\sum_{B \subseteq V \, : \, |B|=k} \Big (\sum_{i \in B} \bm{e}_i\Big )^T L^+ \Big (\sum_{i \in B} \bm{e}_i\Big ) = \sum_{B \subseteq V \, : \, |B|=k} \Big ( \sum_{i \in B} \bm{e}_i^T L^+ \bm{e}_i \Big ) + \sum_{B \subseteq V \, : \, |B|=k} \Big (\sum_{i,j \in B, \, i \neq j} \bm{e}_i^T L^+ \bm{e}_j\Big ).
\end{equation}
The first term on the RHS of~\eqref{eq:intermediate} can be rewritten as
\begin{align*}
	\sum_{B \subseteq V \, : \, |B|=k} \Big ( \sum_{i \in B} \bm{e}_i^T L^+ \bm{e}_i \Big )
	&= \sum_{i=1}^n \bm{e}_i^T L^+ \bm{e}_i \Big ( \sum_{B \subseteq V \, : \, |B|=k} \mathds{1}_{\{i \in B\}} \Big ) \\
	&= \sum_{i=1}^n \binom{n-1}{k-1} \bm{e}_i^T L^+ \bm{e}_i  = \binom{n-1}{k-1} \sum_{i=1}^n \bm{e}_i^T L^+ \bm{e}_i \\
	&= \binom{n-1}{k-1} \sum_{i=1}^n L^+_{i,i} =\binom{n-1}{k-1} \tr (L^+),
\end{align*}
while the second term on the RHS of~\eqref{eq:intermediate} is equal to
\begin{align*}
	\sum_{B \subseteq V \, : \, |B|=k} \Big (\sum_{i,j \in B, \, i \neq j} \bm{e}_i^T L^+ \bm{e}_j\Big )
	&= \sum_{i\neq j} \bm{e}_i^T L^+ \bm{e}_j \Big ( \sum_{B \subseteq V \, : \, |B|=k} \mathds{1}_{\{i \in B, \, j\in B\}} \Big ) = \sum_{i=1}^n \binom{n-2}{k-2} \bm{e}_i^T L^+ \bm{e}_j \\
	& = \binom{n-2}{k-2} \sum_{i\neq j} \bm{e}_i^T L^+ \bm{e}_j = - \binom{n-2}{k-2} \tr (L^+).
\end{align*}
In the last step we used the fact that $\sum_{i\neq j} \bm{e}_i^T L^+ \bm{e}_j = - \tr (L^+)$, which immediately follows from
\begin{align*}
	\sum_{i\neq j} \bm{e}_i^T L^+ \bm{e}_j + \tr (L^+) &= \sum_{i\neq j} \bm{e}_i^T L^+ \bm{e}_j + \sum_{i} L^+_{i,i} = \sum_{i\neq j} \bm{e}_i^T L^+ \bm{e}_j + \sum_{i} \bm{e}_i^T L^+ \bm{e}_i = \Big (\sum_{k=1}^n e_k \Big )^T L^+  \Big (\sum_{k=1}^n e_k \Big )\\
	&= \bm{1}^T L^+ \bm{1} = 0.
\end{align*}
Hence,~\eqref{eq:intermediate} rewrites as
\[
	\sum_{B \subseteq V \, : \, |B|=k} \Big (\sum_{i \in B} \bm{e}_i\Big )^T L^+ \Big (\sum_{i \in B} \bm{e}_i\Big ) = \Big (\binom{n-1}{k-1} - \binom{n-2}{k-2}\Big ) \tr(L^+) = \binom{n-2}{k-1} \tr(L^+),
\]
which concludes the proof of identity~\eqref{eq:quadid}.

Each load-sharing vector $\a \in \Ak$ can be written as $\a = \frac{1}{k} \sum_{i \in B} \bm{e}_i,$ for some $B \subseteq V$, $|B|=k$. By Proposition~\ref{prop:Eha}, the expected total loss due to stochastic fluctuations when using this load-sharing vector is given by
\[
	\E \cH_s(\a) = \frac{\sigma^2 }{2} \left (\frac{1}{k} \sum_{i \in B} \bm{e}_i\right )^T L^+ \left (\frac{1}{k} \sum_{i \in B} \bm{e}_i\right ) -  \bm{1}^T \bS L^+ \left (\frac{1}{k} \sum_{i \in B} \bm{e}_i\right )  + \frac{1}{2} \tr(\bS L^+).
\]
Therefore,
\[
	\cH_k = \frac{1}{2} \tr(\bS L^+) +  \frac{1}{|\Ak|} \sum_{B \subseteq V \, : \, |B|=k}\left [ \frac{\sigma^2 }{2} \Big (\frac{1}{k} \sum_{i \in B} \bm{e}_i\Big )^T L^+ \Big (\frac{1}{k} \sum_{i \in B} \bm{e}_i\Big ) -  \bm{1}^T \bS L^+ \Big (\frac{1}{k} \sum_{i \in B} \bm{e}_i\Big )\right ].
\]
Using~\eqref{eq:linearid} and~\eqref{eq:quadid}, we get
\begin{align*}
	\cH_k &=  \frac{1}{2} \tr(\bS L^+) +  \sigma^2 \frac{\tr(L^+)}{2} \frac{\binom{n-2}{k-1}}{k^2 \binom{n}{k}} =  \frac{1}{2} \tr(\bS L^+)  + \sigma^2   \frac{\tr(L^+)}{2} \frac{n-k}{k \cdot n (n-1)} \\
	&= \frac{1}{2} \tr(\bS L^+)  +  \sigma^2  \frac{\tr(L^+)}{2 n (n-1)} \frac{1-\frac{k}{n}}{ \frac{k}{n}}=\frac{1}{2} \tr(\bS L^+) + \sigma^2   \frac{\tr(L^+)}{2 (n-1)} \Big(\frac{1}{k} - \frac{1}{n} \Big). \hfill \qed
\end{align*}

\begin{thebibliography}{10}

\bibitem{Apostolopoulou2014}
D.~Apostolopoulou, P.~Sauer, and A.~Dominguez-Garcia.
\newblock {Automatic Generation Control and Its Implementation in Real Time}.
\newblock In {\em 2014 47th Hawaii International Conference on System
  Sciences}, pages 2444--2452. IEEE, jan 2014.

\bibitem{Barooah2019}
P.~Barooah.
\newblock {\em Virtual Energy Storage from Flexible Loads: Distributed Control
  with QoS Constraints}, pages 99--115.
\newblock Springer International Publishing, Cham, 2019.

\bibitem{BGK12}
A.~Bejan, R.~Gibbens, and F.~Kelly.
\newblock {Statistical aspects of storage systems modelling in energy
  networks}.
\newblock In {\em 46th Annual Conference on Information Sciences and Systems
  (CISS)}, pages 1--6, Princeton, NJ, mar 2012. IEEE.

\bibitem{BCZ16}
D.~Bhaumik, D.~Crommelin, and B.~Zwart.
\newblock {A computational method for optimizing storage placement to maximize
  power network reliability}.
\newblock In {\em To appear in the Proceeding of the 2016 Winter Simulation
  Conference}, 2015.

\bibitem{BCH14}
D.~Bienstock, M.~Chertkov, and S.~Harnett.
\newblock {Chance-Constrained Optimal Power Flow: Risk-Aware Network Control
  under Uncertainty}.
\newblock {\em SIAM Review}, 56(3):461--495, 2014.

\bibitem{Bienstock2016a}
D.~Bienstock, C.~Matke, G.~Munoz, and S.~Yang.
\newblock {Robust linear control of nonconvex battery operation in transmission
  systems}.
\newblock {\em Preprint at arXiv:1610.09432}, 2016.

\bibitem{Chertkov2017d}
M.~{Chertkov} and Y.~{Dvorkin}.
\newblock Chance constrained optimal power flow with primary frequency
  response.
\newblock In {\em 2017 IEEE 56th Annual Conference on Decision and Control
  (CDC)}, pages 4484--4489, Dec 2017.

\bibitem{CZ15}
J.~Cruise and S.~Zachary.
\newblock {The optimal control of storage for arbitrage and buffering, with
  energy applications}.
\newblock {\em Preprint at arXiv:1509.05788}, 2015.

\bibitem{Cruise2016}
J.~R. Cruise, L.~Flatley, and S.~Zachary.
\newblock Impact of storage competition on energy markets.
\newblock {\em European Journal of Operational Research}, 269(3):998--1012,
  2018.

\bibitem{CGZ14}
J.~R. Cruise, R.~J. Gibbens, and S.~Zachary.
\newblock Optimal control of storage for arbitrage, with applications to energy
  systems.
\newblock In {\em Information Sciences and Systems (CISS), 2014 48th Annual
  Conference on}, pages 1--6. IEEE, 2014.

\bibitem{Daganzo1997}
C.~F. Daganzo.
\newblock {\em {Fundamentals of Transportation and Traffic Operations}}.
\newblock Emerald Group Publishing Limited, sep 1997.

\bibitem{Deka2017}
D.~{Deka}, H.~{Nagarajan}, and S.~{Backhaus}.
\newblock Optimal topology design for disturbance minimization in power grids.
\newblock In {\em 2017 American Control Conference (ACC)}, pages 2719--2724,
  May 2017.

\bibitem{Doyle2000}
P.~G. Doyle and J.~L. Snell.
\newblock {Random Walks and Electric Networks}.
\newblock {\em American Mathematical Monthly}, 94(January):202, 2000.

\bibitem{Ellens2011}
W.~Ellens, F.~Spieksma, P.~{Van Mieghem}, a.~Jamakovic, and R.~Kooij.
\newblock {Effective graph resistance}.
\newblock {\em Linear Algebra and its Applications}, 435(10):2491--2506, nov
  2011.

\bibitem{Gast2013b}
N.~Gast, J.-Y. {Le Boudec}, A.~Prouti{\`{e}}re, and D.-C. Tomozei.
\newblock {Impact of storage on the efficiency and prices in real-time
  electricity markets}.
\newblock {\em Proceedings of the 4th international conference on Future energy
  systems - e-Energy '13}, 2:15, 2013.

\bibitem{Gast2014b}
N.~{Gast}, D.~{Tomozei}, and J.~{Le Boudec}.
\newblock Optimal generation and storage scheduling in the presence of
  renewable forecast uncertainties.
\newblock {\em IEEE Transactions on Smart Grid}, 5(3):1328--1339, May 2014.

\bibitem{Gast2012}
N.~Gast, D.-C. Tomozei, and J.-Y. {Le Boudec}.
\newblock {Optimal Storage Policies with Wind Forecast Uncertainties}.
\newblock {\em ACM SIGMETRICS Performance Evaluation Review}, 40(3):1--5, 2012.

\bibitem{GBS08}
A.~Ghosh, S.~Boyd, and A.~Saberi.
\newblock {Minimizing Effective Resistance of a Graph}.
\newblock {\em SIAM Review}, 50(1):37--66, 2008.

\bibitem{Guggilam2017b}
S.~Guggilam, C.~Zhao, E.~Dall'Anese, Y.~Chen, and S.~Dhople.
\newblock {Primary frequency response with aggregated DERs}.
\newblock In {\em 2017 American Control Conference (ACC)}, pages 3386--3393.
  IEEE, may 2017.

\bibitem{Guggilam2017c}
S.~S. {Guggilam}, C.~{Zhao}, E.~{Dall'Anese}, Y.~C. {Chen}, and S.~V. {Dhople}.
\newblock Engineering inertial and primary-frequency response for distributed
  energy resources.
\newblock In {\em 2017 IEEE 56th Annual Conference on Decision and Control
  (CDC)}, pages 5112--5118, Dec 2017.

\bibitem{GuoLiangLow2017}
L.~{Guo}, C.~{Liang}, and S.~H. {Low}.
\newblock Monotonicity properties and spectral characterization of power
  redistribution in cascading failures.
\newblock In {\em 2017 55th Annual Allerton Conference on Communication,
  Control, and Computing (Allerton)}, pages 918--925, Oct 2017.

\bibitem{JohnsonChertkov2010}
J.~Johnson and M.~Chertkov.
\newblock {A majorization-minimization approach to design of power transmission
  networks}.
\newblock In {\em 49th IEEE Conference on Decision and Control (CDC)}, pages
  3996--4003. IEEE, 2010.

\bibitem{Kanoria2011}
Y.~Kanoria, A.~Montanari, D.~Tse, and B.~Zhang.
\newblock {Distributed storage for intermittent energy sources: Control design
  and performance limits}.
\newblock In {\em 49th Annual Allerton Conference on Communication, Control,
  and Computing}, pages 1310--1317, 2011.

\bibitem{KR93}
D.~J. Klein and M.~Randi{\'{c}}.
\newblock {Resistance distance}.
\newblock {\em Journal of Mathematical Chemistry}, 12(1):81--95, 1993.

\bibitem{LaiLow2013}
C.~Lai and S.~Low.
\newblock {The redistribution of power flow in cascading failures}.
\newblock In {\em 2013 51st Annual Allerton Conference on Communication,
  Control, and Computing (Allerton)}, pages 1037--1044. IEEE, oct 2013.

\bibitem{LinBitar2016c}
W.~Lin and E.~Bitar.
\newblock {Decentralized control of distributed energy resources in radial
  distribution systems}.
\newblock In {\em 2016 IEEE International Conference on Smart Grid
  Communications (SmartGridComm)}, pages 296--301. IEEE, nov 2016.

\bibitem{LinBitar2016a}
W.~Lin and E.~Bitar.
\newblock {Performance bounds for robust decentralized control}.
\newblock In {\em 2016 American Control Conference (ACC)}, pages 4323--4330.
  IEEE, 2016.

\bibitem{LinBitar2016b}
W.~Lin and E.~Bitar.
\newblock {Decentralized Stochastic Control of Distributed Energy Resources}.
\newblock {\em IEEE Transactions on Power Systems}, 8950(c):1--1, 2017.

\bibitem{Lorca2017}
A.~Lorca and X.~A. Sun.
\newblock {Multistage Robust Unit Commitment with Dynamic Uncertainty Sets and
  Energy Storage}.
\newblock {\em IEEE Transactions on Power Systems}, 32(3):1678--1688, 2017.

\bibitem{LorcaSun2015}
{\'{A}}.~Lorca, X.~A. Sun, E.~Litvinov, and T.~Zheng.
\newblock {Multistage Adaptive Robust Optimization for the Unit Commitment
  Problem}.
\newblock {\em Operations Research}, 64(1):32--51, feb 2016.

\bibitem{LubinDvorkinBackhaus2016}
M.~Lubin, Y.~Dvorkin, and S.~Backhaus.
\newblock {A Robust Approach to Chance Constrained Optimal Power Flow With
  Renewable Generation}.
\newblock {\em IEEE Transactions on Power Systems}, 31(5):3840--3849, sep 2016.

\bibitem{MP92}
A.~Mathai and S.~Provost.
\newblock {\em {Quadratic forms in random variables: Theory and applications}}.
\newblock Statistics: textbooks and monographs, Vol. 126. CRC Press, 1992.

\bibitem{BusicMeyn2017}
J.~Mathias, A.~Bu{\v{s}}i{\'{c}}, and S.~Meyn.
\newblock {Demand Dispatch with Heterogeneous Intelligent Loads}.
\newblock In {\em 50th Annual Hawaii International Conference on System
  Sciences (HICSS)}, Waikoloa, HI, United States, 2017.

\bibitem{Mathias2016b}
J.~Mathias, R.~Kaddah, A.~Bu{\v{s}}i{\'{c}}, and S.~Meyn.
\newblock {Smart Fridge / Dumb Grid? Demand Dispatch for the Power Grid of
  2020}.
\newblock In {\em 2016 49th Hawaii International Conference on System Sciences
  (HICSS)}, pages 2498--2507. IEEE, jan 2016.

\bibitem{Bienstock2017}
C.~Matke, D.~Bienstock, G.~Mu{\~{n}}oz, S.~Yang, D.~Kleinhans, and S.~Sager.
\newblock {Robust optimization of power network operation: storage devices and
  the role of forecast errors in renewable energies}.
\newblock In H.~Cherifi, S.~Gaito, W.~Quattrociocchi, and A.~Sala, editors,
  {\em Complex Networks {\&} Their Applications V}, volume 693 of {\em Studies
  in Computational Intelligence}, pages 809--820. Springer International
  Publishing, Cham, 2017.

\bibitem{Meyer73}
C.~D. {Meyer, Jr.}
\newblock {Generalized Inversion of Modified Matrices}.
\newblock {\em SIAM Journal on Applied Mathematics}, 24(3):315--323, may 1973.

\bibitem{Meyn2015}
S.~Meyn, P.~Barooah, A.~Bu{\v{s}}i{\'{c}}, Y.~Chen, and J.~Ehren.
\newblock {Ancillary service to the grid using intelligent deferrable loads}.
\newblock {\em IEEE Transactions on Automatic Control}, 60(11):2847--2862,
  2015.

\bibitem{Meyn2013}
S.~Meyn, P.~Barooah, A.~Bu{\v{s}}i{\'{c}}, and J.~Ehren.
\newblock {Ancillary service to the grid from deferrable loads: The case for
  intelligent pool pumps in Florida}.
\newblock In {\em 52nd IEEE Conference on Decision and Control}, pages
  6946--6953. IEEE, dec 2013.

\bibitem{Nesti2017}
T.~{Nesti}, A.~{Zocca}, and B.~{Zwart}.
\newblock Line failure probability bounds for power grids.
\newblock In {\em 2017 IEEE Power Energy Society General Meeting}, pages 1--5,
  July 2017.

\bibitem{Nesti2018}
T.~Nesti, A.~Zocca, and B.~Zwart.
\newblock Emergent failures and cascades in power grids: A statistical physics
  perspective.
\newblock {\em Phys. Rev. Lett.}, 120:258301, Jun 2018.

\bibitem{RanjanZhangBoley2014}
G.~Ranjan, Z.-L. Zhang, and D.~Boley.
\newblock {Incremental Computation of Pseudo-Inverse of Laplacian}.
\newblock In {\em COCOA 2014: 8th International Conference on Combinatorial
  Optimization and Applications}, volume 5573, pages 729--749. Springer
  International Publishing, 2014.

\bibitem{Rencher2008}
A.~C. Rencher and G.~B. Schaalje.
\newblock {\em Linear models in statistics}.
\newblock John Wiley \& Sons, 2008.

\bibitem{RoaldChertkov2016}
L.~Roald, G.~Andersson, S.~Misra, M.~Chertkov, and S.~Backhaus.
\newblock {Optimal power flow with wind power control and limited expected risk
  of overloads}.
\newblock In {\em 2016 Power Systems Computation Conference (PSCC)}, pages
  1--7. IEEE, 2016.

\bibitem{Roald2017a}
L.~Roald, S.~Misra, T.~Krause, and G.~Andersson.
\newblock {Corrective Control to Handle Forecast Uncertainty: A Chance
  Constrained Optimal Power Flow}.
\newblock {\em IEEE Transactions on Power Systems}, 32(2):1--1, 2016.

\bibitem{Sjodin2012}
E.~Sjodin, D.~Gayme, and U.~Topcu.
\newblock {Risk-mitigated optimal power flow for wind powered grids}.
\newblock In {\em 2012 American Control Conference (ACC)}, pages 4431--4437.
  IEEE, jun 2012.

\bibitem{Bienstock2016}
K.~Sundar, H.~Nagarajan, M.~Lubin, L.~Roald, S.~Misra, R.~Bent, and
  D.~Bienstock.
\newblock {Unit commitment with N-1 Security and wind uncertainty}.
\newblock In {\em 2016 Power Systems Computation Conference (PSCC)}, pages
  1--7. IEEE, jun 2016.

\bibitem{Taylor2015}
J.~A. Taylor and J.~L. Mathieu.
\newblock {Uncertainty in Demand Response—Identification, Estimation, and
  Learning}.
\newblock In {\em The Operations Research Revolution}, pages 56--70. INFORMS,
  sep 2015.

\bibitem{VanDeVen2013}
P.~{Van De Ven}, N.~Hegde, L.~Massouli{\'{e}}, and T.~Salonidis.
\newblock {Optimal control of end-user energy storage}.
\newblock {\em IEEE Transactions on Smart Grid}, 4(2):789--797, 2013.

\bibitem{VanMieghem2011}
P.~{Van Mieghem}.
\newblock {\em {Graph Spectra for Complex Networks}}.
\newblock Cambridge University Press, 2011.

\bibitem{Whittle2007}
P.~Whittle.
\newblock {\em {Networks: Optimisation and Evolution}}.
\newblock Cambridge University Press, 2007.

\bibitem{Wood2014}
A.~Wood, B.~Wollenberg, and G.~Sheble.
\newblock {\em {Power generation, operation, and control}}.
\newblock John Wiley {\&} Sons, 3rd edition, 2014.

\bibitem{Zhou2008}
B.~Zhou and N.~Trinajsti{\'{c}}.
\newblock {A note on Kirchhoff index}.
\newblock {\em Chemical Physics Letters}, 455(1-3):120--123, mar 2008.

\bibitem{Zimmerman2011}
R.~D. {Zimmerman}, C.~E. {Murillo-Sánchez}, and R.~J. {Thomas}.
\newblock Matpower: Steady-state operations, planning, and analysis tools for
  power systems research and education.
\newblock {\em IEEE Transactions on Power Systems}, 26(1):12--19, Feb 2011.

\bibitem{ZZ16}
A.~Zocca and B.~Zwart.
\newblock {Minimizing heat loss in DC networks using batteries}.
\newblock In {\em 2016 54th Annual Allerton Conference on Communication,
  Control, and Computing (Allerton)}, pages 1306--1313. IEEE, sep 2016.

\end{thebibliography}
\end{document}